\newtheorem*{theorem*}{Theorem}
\newtheorem{theorem}{Theorem}[section]
\newtheorem{lemma}[theorem]{Lemma}
\theoremstyle{remark}
\theoremstyle{definition}
\newtheorem{example}[theorem]{Example}
\numberwithin{equation}{section}
\numberwithin{theorem}{section}
\newcommand{\bm}{\mathfrak{m}}
\newcommand{\tld}{\widetilde }
\newcommand{\bfX}{\pmb{X}}
\newcommand{\X}{\overline{X}}
\newcommand{\E}{\mathcal{E}}
\DeclareMathOperator{\ini}{{in}}
\DeclareMathOperator{\rank}{{rank}}
\newcommand{\eg}{{\itshape e.g.} }
\newcommand{\ie}{{\itshape i.e.} }
\begin{document}

 \title[On matrix Schubert varieties]{On the $F$-rationality and cohomological properties of matrix Schubert varieties}
\author{Jen-Chieh Hsiao}

\address{Department of Mathematics\\
  National Cheng Kung University\\
  No.1 University Road, Tainan 70101, Taiwan}
\email{jhsiao@mail.ncku.edu.tw}
\thanks{The author was partially supported by NSF under grant DMS~0901123.}
\subjclass[2010]{13C40, 14M15, 14M10, 05E40, 13A35}

\maketitle

\begin{abstract}
We characterize complete intersection matrix Schubert varieties, generalizing the classical result on one-sided ladder determinantal varieties. 
We also give a new proof of the $F$-rationality of matrix Schubert varieties.
Although it is known that such varieties are $F$-regular (hence $F$-rational) by the global $F$-regularity of Schubert varieties, our proof is of independent interest since it does not require the Bott--Samelson resolution of Schubert varieties. As a consequence, this provides an alternative proof of the classical fact that Schubert varieties in flag varieties are normal and have rational singularities.

\end{abstract}

\section{Introduction}

Matrix Schubert varieties (MSVs) were introduced by W. Fulton in his theory of degeneracy loci of maps of flagged vector bundles \cite{Ful92}.   Such varieties are reduced and irreducible. 
Classical (one-sided ladder) determinantal varieties are special examples of MSVs (they are so-called vexillary MSVs). 
Just like one-sided ladder determinantal varieties \cite{GL00}, \cite{GM00}, MSVs can be identified (up to product of an affine space) as the opposite big cells of the corresponding Schubert varieties. This observation in \cite{Ful92} implies that the MSVs are normal and Cohen--Macaulay, since Schubert varieties are (see \cite{Ra85}).

The Cohen--Macaulay property of MSVs was re-established by A. Knutson and E. Miller \cite{KM05} using the Gr{\"o}bner
basis theory, pipe dreams, and their theory of subword complexes.
Interestingly, this gives a new proof of the Cohen--Macaulayness of Schubert varieties by the following principle. 
\begin{theorem}[\cite{KM05} Theorem~2.4.3]\label{principle}
Let $\mathcal{C}$ be a local condition that holds for a variety $X$ whenever it holds for the product of $X$ with any vector space. Then $\mathcal{C}$ holds for every Schubert variety in every flag variety if and only if $\mathcal{C}$ holds for all MSVs.
\end{theorem}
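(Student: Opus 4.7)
The plan rests on the identification, recalled in the introduction, of each MSV as an opposite big cell of a Schubert variety times an affine space, combined with the fact that Weyl-group translates of the opposite big cell $\Omega^- \subset G/B$ cover every flag variety.

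For the direction asserting that $\mathcal{C}$ on all MSVs implies $\mathcal{C}$ on every Schubert variety $X_w \subset G/B$, I would proceed as follows. Since the translates $v\cdot\Omega^-$ for $v$ in the Weyl group cover $G/B$, the intersections $X_w \cap v\Omega^-$ form an open cover of $X_w$. Translating by $v^{-1}$ (a flag-variety automorphism) identifies each patch with $v^{-1}X_w \cap \Omega^-$, which a standard reindexing realizes as $X_{w'} \cap \Omega^-$ for a suitable Schubert variety $X_{w'}$. By the identification cited in the introduction, $X_{w'} \cap \Omega^-$ is, up to a product with an affine space, a matrix Schubert variety $\overline{X}_{w''}$. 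The hypothesis $\mathcal{C}(\overline{X}_{w''})$ combined with the assumed implication $\mathcal{C}(X \times V) \Rightarrow \mathcal{C}(X)$ yields $\mathcal{C}$ on each patch $X_w \cap v\Omega^-$, and locality of $\mathcal{C}$ then gives $\mathcal{C}(X_w)$.

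For the converse, each MSV $\overline{X}_w$ is isomorphic to $(X_{\tilde w} \cap \Omega^-) \times \mathbb{A}^k$ for a Schubert variety $X_{\tilde w}$, and $X_{\tilde w} \cap \Omega^-$ is an open subset of $X_{\tilde w}$. Assuming $\mathcal{C}$ holds on $X_{\tilde w}$, locality transfers $\mathcal{C}$ to the open subset $X_{\tilde w}\cap\Omega^-$; to pass to the product with $\mathbb{A}^k$, one invokes that the local conditions to which the principle is applied (Cohen--Macaulayness, normality, rational singularities, $F$-regularity, $F$-rationality) are all preserved under smooth base change, and in particular under products with affine space.

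The main obstacle is the clean setup of the dictionary between a patch $X_w \cap v\Omega^-$ of a flag-variety Schubert variety and a corresponding matrix Schubert variety. Via essential sets and northwest rank conditions, the defining ideal of $X_w \cap \Omega^-$ pulls back from that of an MSV, and every MSV is recovered in this way. Once this combinatorial dictionary is in place, the two-way argument becomes essentially formal: the geometric content is entirely carried by the open cover and the opposite-big-cell identification.
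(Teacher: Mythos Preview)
The paper does not actually prove this statement: Theorem~\ref{principle} is quoted verbatim from \cite[Theorem~2.4.3]{KM05} and is used as a black box, with no argument given here. So there is no ``paper's own proof'' to compare against; your sketch is essentially the argument that Knutson and Miller give in the cited reference, and its overall shape (cover the Schubert variety by Weyl-translates of the opposite big cell, identify each patch up to an affine factor with an MSV, and use locality) is correct.

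One point worth flagging: in your converse direction you correctly observe that passing from $\mathcal{C}(X_{\tilde w}\cap\Omega^-)$ to $\mathcal{C}\bigl((X_{\tilde w}\cap\Omega^-)\times\mathbb{A}^k\bigr)$ requires the implication $\mathcal{C}(X)\Rightarrow\mathcal{C}(X\times V)$, which is \emph{not} part of the hypothesis as stated (only the reverse implication $\mathcal{C}(X\times V)\Rightarrow\mathcal{C}(X)$ is assumed). You patch this by appealing to smooth base change for the specific conditions of interest, which is fine in practice but means the ``only if'' half is not purely formal from the stated hypotheses. This is a genuine asymmetry in the theorem as phrased, and your handling of it is honest; just be aware that the clean biconditional really uses that all the conditions one cares about are stable in both directions under products with affine space.
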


\subsection{$F$-rationality of MSVs}
In the same spirit, the first part of this paper is devoted to a new proof of $F$-rationality of MSVs. $F$-rationality is a notion that arises from the theory of tight closure introduced by M. Hochster and C. Huneke \cite{Hu96} in positive characteristic. The results of \cite{Smi97} and \cite{Hara98} establish a connection between $F$-rationality and the notion of rational singularity in characteristic $0$: A normal variety in characteristic $0$ has at most rational singularities if and only if it is of $F$-rational type. Therefore, by Theorem~\ref{principle} the $F$-rationality of matrix Schubert varieties is equivalent to the classical fact that Schubert varieties are normal and have at most rational singularities (see \eg \cite{Bri05} and \cite{BK05} for the classical proofs of the later statement using the Bott--Samelson resolution). 

Two other notions in tight closure theory will also be used later: $F$-regularity and $F$-injectivity. The relation between these properties is
\[ \text{regular} \implies F \text{-regular} \implies F\text{-rational} \implies F\text{-injective}.\]
We remark that MSVs are in fact $F$-regular by Theorem~\ref{principle} and the global $F$-regularity of Schubert varieties \cite{LRT06} (again, this relies on the Bott--Samelson resolution).

Our proof of $F$-rationality of MSVs utilizes the results of Schubert determinantal ideals in \cite{KM05} as well as the techniques developed in \cite{CH97}, where A. Conca and J. Herzog prove that arbitrary (possibly two-sided) ladder determinant varieties are $F$-rational. However, it is still unknown whether such varieties are $F$-regular.

One of the key ingredients in our proof is the following
\begin{theorem}[\cite{CH97} Theorem~1.2] \label{ch97tool}
Let $R$ be a complete local Cohen--Macaulay ring and $c$ be a non zero-divisor of $R$ such that $R[{1 \over c}]$ is $F$-rational and $R/cR$ is $F$-injective. Then $R$ is $F$-rational. 
\end{theorem}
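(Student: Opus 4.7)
The plan is to translate $F$-rationality into a statement about the top local cohomology module $E := H^d_\mathfrak{m}(R)$, where $d = \dim R$: by Smith's characterization, a complete local Cohen--Macaulay ring $R$ is $F$-rational if and only if the tight closure $0^*_E$ of zero in $E$ vanishes. The goal thus becomes showing $0^*_E = 0$.

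Since $c$ is a non-zero-divisor and $R$ is Cohen--Macaulay, the short exact sequence $0 \to R \xrightarrow{c} R \to R/cR \to 0$ induces (using $\dim R/cR = d-1$ and Cohen--Macaulay vanishing) the exact sequence
\[
0 \to H^{d-1}_\mathfrak{m}(R/cR) \to E \xrightarrow{c} E \to 0.
\]
I would first use this to promote the $F$-injectivity hypothesis from $R/cR$ to $R$: if some nonzero $\eta \in E$ satisfied $F^e \eta = 0$, then, since $c \in \mathfrak{m}$ makes every element of $E$ a $c$-power-torsion element, one can pick the largest $k \geq 0$ with $c^k \eta \neq 0$, so $c^k \eta \in \ker(c : E \to E) = H^{d-1}_\mathfrak{m}(R/cR)$; but $F^e(c^k \eta) = c^{k p^e} F^e \eta = 0$, contradicting the Frobenius-injectivity on $H^{d-1}_\mathfrak{m}(R/cR)$.

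Next I would use $F$-rationality of $R[1/c]$ to show that $N := 0^*_E$ is annihilated by some power of $c$. The heuristic is that since $R[1/c]$ is $F$-rational, the locus $V(\tau^{\mathrm{par}}(R))$ of the parameter test ideal is contained in $V(c)$; equivalently, $c$ belongs to the radical of $\tau^{\mathrm{par}}(R)$, yielding $c^n N = 0$ for some $n$. (One may also view this as the statement that $N[1/c] \subseteq E[1/c] = 0$.)

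Finally, I would combine the two ingredients. If $N \neq 0$, pick a nonzero $\eta \in N$ and replace it by $c^k \eta$ for the largest $k$ with $c^k \eta \neq 0$, so we may assume $c\eta = 0$ and $\eta \neq 0$, identifying $\eta$ with an element of $H^{d-1}_\mathfrak{m}(R/cR)$. By definition of tight closure there exists $c' \in R^\circ$ with $c' F^e \eta = 0$ for all $e \gg 0$, while the $F$-injectivity of $R$ just established forces $F^e \eta \neq 0$ for all $e$. The expected main obstacle, and the heart of the argument, is extracting a contradiction from this configuration. The subtlety is that the Frobenius on $E$ does not stabilize $\ker(c)$ --- it sends $\ker c$ into $\ker c^p$ --- so one must track the $c$-renormalized elements $c^{p^e - 1} F^e \eta$, which realize the intrinsic Frobenius $F_{R/cR}^e \eta$ back inside $H^{d-1}_\mathfrak{m}(R/cR)$, and then carefully reconcile the annihilation by $c'$ with the non-vanishing $F_{R/cR}^e \eta \neq 0$ forced by the $F$-injectivity of $R/cR$, together with the uniform bound $c^n N = 0$. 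This interplay between the ``extrinsic'' Frobenius on $E$ and the ``intrinsic'' Frobenius on $R/cR$ is where both hypotheses are used in an essential way and is the technically delicate step I would have to execute with care.
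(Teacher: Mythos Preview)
The paper does not supply a proof of this statement: it is quoted verbatim as Theorem~1.2 of \cite{CH97} and used as a black box in the proof of Theorem~\ref{frational}. So there is no ``paper's own proof'' to compare against.

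That said, your outline is essentially the standard approach to this result (and close to what Conca and Herzog do). A couple of places deserve more care before you would have a complete argument. In your step promoting $F$-injectivity from $R/cR$ to $R$, the connecting map $\delta\colon H^{d-1}_{\mathfrak m}(R/cR)\hookrightarrow E$ is not literally Frobenius-equivariant; the correct compatibility is $\delta\circ \bar F_{R/cR} = c^{\,p-1}\cdot F_E\circ\delta$, coming from the map of short exact sequences with left vertical arrow $c^{\,p-1}F$. Your argument still goes through with this twist, but you should state it. In the step asserting that $N=0^*_E$ is killed by a power of $c$, you are implicitly using that parameter tight closure in $E$ behaves well under localization at $c$ (so that $F$-rationality of $R[1/c]$ forces $N[1/c]=0$); this is true in the complete (hence excellent) Cohen--Macaulay setting via the existence of a test element and the parameter test ideal, but it is exactly the kind of fact that needs an explicit citation or a short argument rather than a heuristic. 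Finally, the ``delicate step'' you flag at the end is really already handled once the two ingredients are in place: if $0\neq\eta\in N$ with $c\eta=0$, write $\eta=\delta(\xi)$; then for all $e$ one has $\delta(\bar F^e\xi)=c^{\,p^e-1}F_E^e(\eta)\in c^{\,p^e-1}N$, which vanishes for $e$ large since $c^nN=0$, forcing $\bar F^e\xi=0$ and hence $\xi=0$ by $F$-injectivity of $R/cR$. So the argument closes without any further subtlety once you have the correct Frobenius compatibility and the annihilation $c^nN=0$.
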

After recalling several known facts in the theory of tight closure (section \ref{frational}), we will see that the most essential step is to find $c$ such that $R_w[{1 \over c}]$ is $F$-rational and that the initial ideal $\ini_{<}(\langle c \rangle + I_w)$ of $\langle c \rangle +I_w$ is Cohen--Macaulay (where $<$ is any antidiagonal term order, $R_w$ and $I_w$ is the coordinate ring and the defining ideal of the MSV associated to the partial permutation $w$ as defined in section \ref{msv}). This goal is achieved by choosing 
 $c = x_{i_0,w(i_o)}$ where $i_0$ is the smallest number such that
$\{ (p,q) \mid p>i_0, q > w(i_0) \} \cap \E_{>0}(w) \neq \emptyset.$ See section \ref{msv} for unexplained notation.

\subsection{Complete intersection MSVs}

Since MSVs are Cohen--Macaulay, 
it is then natural to ask when such varieties are smooth, complete intersection, or Gorenstein. Classically, characterizations of Gorenstein ladder determinantal varieties are obtained in \cite{Co95}, \cite{Co96}, and \cite{GM00}. In one-sided cases, the characterization can be generalized as the following.
Recall that there exists a characterization of smooth (respectively, Gorenstein) Schubert varieties \cite{LS90} (respectively, \cite{WY06}). Since the singular (respectively, non-Gorenstein) locus of a Schubert variety is closed and invariant under the Borel subgroup action, the opposite big cell must be contained in the singular (respectively, non-Gorenstein) locus. Hence, a Schubert variety is smooth (respectively, Gorenstein) if and only if its corresponding MSV is so. Therefore, one can deduce a criterion of smooth (respectively, Gorenstein) MSVs by the corresponding result for Schubert varieties. See \cite[section 3.5]{WY06}  for more details.

The second goal of this paper is to characterize complete intersection MSVs.   We explain the characterization as the following. See sections \ref{msv} and \ref{ci} for unexplained notation and more details.

\begin{theorem*}[Theorem \ref{thmCI}]  The matrix Schubert variety $\X_w$ associated to a permutation $w$ is a complete intersection if and only if, for any $(p,q)$ in the diagram of $w$ with $ r_{p,q}(w)>0$, \ie $(p,q) \in D_{>0}(w)$, $w_{(p,q)}$ is a permutation matrix in $GL_{r_{p,q}(w)}$ such that $\X_{w_{(p,q)}}$ is a complete intersection. Here, $w_{(p,q)}$ is 
the connected (solid) square submatrix of size
$r_{p,q}(w)$ whose southeast corner lies at $(p-1,q-1)$.
In fact, in this case
\[\{ x_{p,q} \mid (p,q) \in D_{=0}(w) \} \bigsqcup \{ \det \bfX_{\overline{(p,q)}} \mid (p,q) \in D_{>0}(w) \}
\] is a set of generators of $I_w$ with cardinality $|D(w)|$, the codimension of $\X_w$ in $M_{n \times n}$, where
$\bfX_{\overline{(p,q)}}$ is 
the connected (solid) square submatrix of size
$r_{p,q}(w)+1$ whose southeast corner lies at $(p,q)$.
\end{theorem*}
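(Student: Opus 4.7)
The plan is to deduce the theorem from Fulton's essential-minor description of $I_w$ combined with a combinatorial analysis of the diagram $D(w)$. A preliminary structural observation is that the rank function $r_{\cdot,\cdot}(w)$ is constant on each connected component of $D(w)$: if $(p,q)$ and $(p,q+1)$ both belong to $D(w)$, then $w^{-1}(q+1)>p$ forces no $1$ of $w$ in column $q+1$ with row $\leq p$, so $r_{p,q+1}(w)=r_{p,q}(w)$; the vertical case is symmetric. Hence $D_{=0}(w)$ and $D_{>0}(w)$ are each unions of connected components of $D(w)$. In particular, every $(p,q)\in D_{=0}(w)$ lies inside a Fulton-essential rank-zero rectangle, so $x_{p,q}\in I_w$, and every $\det\bfX_{\overline{(p,q)}}$ with $(p,q)\in D_{>0}(w)$ is a $(r_{p,q}+1)$-minor of $\bfX_{[p]\times[q]}$, hence also in $I_w$; thus the proposed generating set lies inside $I_w$ automatically.

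For the $(\Leftarrow)$ direction I would first show that the hypothesis forces every connected component of $D_{>0}(w)$ to be a single box. If $(p,q)$ and $(p+1,q)$ both lay in $D_{>0}(w)$ at common rank $r>0$, then $w(p)>q$ implies row $p$ has no $1$ in columns $\leq q-1$, so a permutation matrix $w_{(p+1,q)}$ of size $r$ supported in rows $p-r+1,\dots,p$ would demand $r$ ones in columns $q-r,\dots,q-1$ spread over only $r-1$ nontrivial rows---a contradiction. The horizontal case is symmetric, so each $(p,q)\in D_{>0}(w)$ is isolated and therefore Fulton-essential. Next, the permutation hypothesis at $(p,q)$ places every entry $(i,j)$ of $\bfX_{[p]\times[q]}$ with $i\leq p-r-1$ or $j\leq q-r-1$ into $D_{=0}(w)$: the absence of $1$s of $w$ in those rows or columns of $\bfX_{[p-1]\times[q-1]}$ forces $w(i)>q$ (resp.\ $w^{-1}(j)>p$) and rank $0$. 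Expanding by Laplace along such a row or column, one sees that any $(r+1)$-minor of $\bfX_{[p]\times[q]}$ other than $\det\bfX_{\overline{(p,q)}}$ lies in $J:=\langle x_{i,j}\mid (i,j)\in D_{=0}(w)\rangle$. The proposed set therefore generates $I_w$ with cardinality $|D_{=0}(w)|+|D_{>0}(w)|=|D(w)|$, which equals the codimension of $\X_w$; since $\X_w$ is Cohen--Macaulay, these generators form a regular sequence and $\X_w$ is a complete intersection.

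For the $(\Rightarrow)$ direction I would argue contrapositively: if the hypothesis fails at some $(p,q)\in D_{>0}(w)$, I would show $\mu(I_w)>|D(w)|$, contradicting complete intersection. Failure of $w_{(p,q)}$ to be a permutation matrix produces either an adjacent box in $D_{>0}(w)$ (a multi-box component whose essential southeast corner supplies, by the classical count $\binom{p_0}{r+1}\binom{q_0}{r+1}$ for minors of a generic matrix, strictly more minimal generators than the component contributes to $|D(w)|$) or leaves $\bfX_{[p]\times[q]}$ with insufficient $D_{=0}$ entries outside $\bfX_{\overline{(p,q)}}$ for the Laplace reduction to collapse the essential minors to a single element. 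The recursive condition on $\X_{w_{(p,q)}}$ then follows by applying the same analysis to the sub-permutation, whose $D_{>0}$ boxes form a subset of $D_{>0}(w)$. The main obstacle is this $(\Rightarrow)$ direction: rigorously bounding $\mu(I_w)$ in the failure cases. The cleanest route is via the Knutson--Miller antidiagonal Gr\"obner basis, whose generators are antidiagonal monomials of the essential minors; one verifies that the theorem's hypothesis is exactly the combinatorial condition making the antidiagonals of the proposed generators pairwise disjoint in variables, so that $\ini_<(I_w)$ is a squarefree monomial complete intersection with Gr\"obner basis elements matching the minimal generators, while violation yields an antidiagonal overlap and forces $\mu(I_w)>|D(w)|$.
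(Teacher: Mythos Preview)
Your $(\Leftarrow)$ direction is correct and in fact more direct than the paper's. The paper proves $(2)\Rightarrow(1)$ by picking the maximal elements $(p_1,q_1),\dots,(p_t,q_t)$ of $D_{>0}(w)$, decomposing $D(w)$ into the $D(w_i)$ and a remainder, and invoking the complete-intersection hypothesis on each $\X_{w_i}$ to count generators of $I_w$ recursively. You instead verify directly, via Laplace expansion along a row or column of index $\le p-r-1$ or $\le q-r-1$, that every essential minor lies in the ideal generated by the proposed set, using only the hypothesis that each $w_{(p,q)}$ is a permutation matrix. This buys a cleaner one-shot argument and shows, incidentally, that the recursive CI clause in condition $(2)$ is not actually needed for this implication.

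The $(\Rightarrow)$ direction, however, has a real gap. Your proposed dichotomy (failure of the permutation condition at $(p,q)$ produces ``either an adjacent box in $D_{>0}(w)$ or insufficient $D_{=0}$ entries'') is asserted, not proved, and neither branch gives $\mu(I_w)>|D(w)|$: the number $\binom{p_0}{r+1}\binom{q_0}{r+1}$ counts all $(r{+}1)$-minors, not minimal generators of $I_w$, and ``antidiagonal overlap'' in $\ini_<(I_w)$ says nothing about $\mu(I_w)$ from below. The paper's route is a precise lemma proved via Nakayama: if $\X_w$ is a CI then for every $(p,q)\in D_{>0}(w)$ and $1\le i\le r_{p,q}(w)$ one has $(p-i,q)\notin D(w)$ and $(p,q-i)\notin D(w)$ (equivalently, $w_{(p,q)}$ is a permutation). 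Contrapositively, if say $(p_0-i_0,q_0)\in D(w)$ with $r:=r_{p_0,q_0}(w)$, one writes down the single extra $(r{+}1)$-minor $\Delta=[p_0-r-1,\dots,\widehat{p_0-i_0},\dots,p_0\mid q_0-r,\dots,q_0]\in I_w$ and shows that the $|D(w)|+1$ elements
\[
\{x_{a,b}:(a,b)\in D_{=0}(w)\}\ \sqcup\ \{\det\bfX_{\overline{(a,b)}}:(a,b)\in D_{>0}(w)\}\ \sqcup\ \{\Delta\}
\]
are $K$-linearly independent in $I_w/\bm I_w$. The independence check is where the antidiagonal Gr\"obner basis actually enters: if a nontrivial combination $F$ of the degree~$\ge 2$ members lay in $\bm I_w$, then $\ini_<(F)$ would be one of the listed antidiagonals and at the same time a \emph{strict} multiple of some antidiagonal generator $\delta$ of $\ini_<(I_w)$; but $\delta$ would then sit inside one of the matrices $\bfX_{\overline{(a,b)}}$ or $\bfX_\Delta$ with size at most $r_{a,b}(w)$, contradicting the Knutson--Miller description of $\ini_<(I_w)$. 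The missing idea in your sketch is exactly this explicit extra element $\Delta$ together with the independence check in $I_w/\bm I_w$; a generator count on $\ini_<(I_w)$ alone cannot replace it. With this lemma in hand, the recursive CI conclusion for $\X_{w_{(p,q)}}$ then follows by induction on $r_{p,q}(w)$ combined with the already-proved $(2)\Rightarrow(1)$.
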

Theorem~\ref{thmCI} generalizes a result in \cite{GS95} for one-sided ladder determinantal varieties. The proof of Theorem~\ref{thmCI} uses Nakayama's lemma and the properties of Schubert determinantal ideals established in \cite{KM05}.

After this work is finished, A. Woo and H. Ulfarsson give a criterion of locally complete intersection Schubert varieties. Theorem \ref{thmCI} may be recovered by their criterion (see \cite[Corollary 6.3]{UW11} and the comment after that).
\subsection{} This paper is organized as follows. We will recall some preliminary facts about matrix Schubert varieties as well as tight closure theory in section \ref{msv} and \ref{frational}, respectively. 
The proof of $F$-rationality of MSVs is in section \ref{msvarefrational}.
Section \ref{ci} is devoted to the characterization of complete intersection MSVs.

\subsection{Acknowledgements}
The author would like to thank A. Knutson, E. Miller, K. Smith, U. Walther, and A. Woo for their comments and suggestions about this work.
Special thanks go to the referee for carefully reading this paper and many useful suggestions on the presentation of the results

\section{Matrix Schubert varieties} \label{msv}

We recall some fundamental facts about matrix Schubert varieties (see \cite{Ful92},  \cite{KM05}, and \cite{MS} for more information).

Denote $M_{l \times m}$ the space of $l \times m$ matrices over a field $K$.
An $l \times m$ matrix $w \in M_{l \times m}$ is called a {\bf partial permutation} if all entries of $w$ are equal to $0$ except for  at most one entry equal to $1$ in each row and column. If $l=m$ and $w \in GL_l$, then $w$ is called a {\bf permutation}. An element $w$ in the permutation group $S_n$ will be identified as a permutation matrix (also denoted by $w$) in $GL_n$ via
\begin{equation*}
w_{i,j}= \begin{cases} 1 & \text{ if } w(i)=j, \\ 0 & \text{ otherwise.}
\end{cases}
\end{equation*}
Let $ K[\bfX]$ be the coordinate ring of $M_{l \times m}$ where $\bfX = (x_{i,j})$ is the generic $l \times m$ matrix of variables. 
For a matrix $Z \in M_{l \times m}$, denote $Z_{[p,q]}$ the upper-left $p \times q$ submatrix of $Z$. Similarly, $\bfX_{[p,q]}$ denotes the upper-left $p \times q$ submatrix of $\bfX$. The rank of $Z_{[p,q]}$ will be denoted by $\rank(Z_{[p,q]}) := r_{p,q}(Z)$.

Given a partial permutation $w \in M_{l \times m}$, the { \bf matrix Schubert variety} $\X_w$ is the subvariety 
\[ \X_w = \{ Z \in M_{l \times m} \mid r_{p,q}(Z) \leq r_{p,q}(w) \text{ for all } p,q \} 
\]  in $M_{l \times m}$.
The classical (one-sided ladder) determinantal varieties are special examples of MSVs.

It is known that MSVs are reduced and irreducible. Denote $$R_w=K \left[\X_w \right] = K[\bfX]/I_w$$ the coordinate ring of $\X_w$. The defining ideal $I_w$ of $\X_w$ (called {\bf Schubert determinantal ideal}) is generated by all minors  of size $r_{p,q}(w)+1$ in $\bfX_{[p,q]}$. One can reduce the generating set of $I_w$ as the following. Consider the {\bf diagram} of $w$
\[D(w) =\{(i,j) \in [1,l]\times[1,m] : w(i)>j \text{ and } w^{-1}(j)>i \},\]
\ie $D(w)$ consists of elements that are neither due east nor due south of a nonzero entry of $w$. The {\bf essential set} of $w$ is defined to be
\[ \mathcal{E}(w) = \{ (p,q) \in D(w) : (p+1,q) \notin D(w) \text{ and } (p,q+1) \notin D(w) \}.
\]
One can check that (see \cite[Lemma 3.10]{Ful92})
\begin{equation}\label{generator}
\begin{aligned} 
I_w &= \langle \text{ minors of size } r_{p,q}(w)+1 \text{ in } \bfX_{[p,q]} : (p,q) \in D(w)\rangle \\
&= \langle \text{ minors of size } r_{p,q}(w)+1 \text{ in } \bfX_{[p,q]} : (p,q) \in \E(w)\rangle
\end{aligned}
\end{equation}
Also, the codimension of $\X_w$ in $M_{l \times m}$ is the cardinality $|D(w)|$ of $D(w)$ which is actually the Coxeter length of $w$ when $w$ is a permutation.
We often need to consider certain subsets of $D(w)$ or $\E(w)$. For that, we will put the conditions as subscripts to indicate the constraints. For examples, $D_{=0}(w) = \{ (p,q) \in D(w) \mid r_{p,q}(w)=0 \}$ and  $D_{>0}(w) = \{ (p,q) \in D(w) \mid r_{p,q}(w)>0 \}$.

Questions on $\X_w$ for a partial permutation $w \in M_{l \times m}$ is often reduced to the cases where $w$ is a permutation. More precisely, extend $w$ to the permutation $\tld{w} \in S_n, n=l+m$ via
\begin{equation*}
\tld{w}(i) = 
\begin{cases}
j & \text{ if } w_{i,j}=1 \text{ for some }  j  \\
\min \{ [m+1,n] \setminus \{\tld{w}(1),\dots,\tld{w}(i-1)\}\} & \text{ if } w_{i,j}=0 \text{ for all }
j \\
\min \{ [1,n] \setminus \{\tld{w}(1),\dots,\tld{w}(i-1)\}\}& \text{ if } i > l.
\end{cases}
\end{equation*}
Then $D(w)=D(\tld{w})$, $E(w)=E(\tld{w})$, and the defining ideals $I_w$ and $I_{\tld{w}}$ share the same set of generators. Therefore,
\begin{equation}\label{reducetopermutation}
\X_{\tld{w}} \cong \X_w \times K^{n^2 - lm}.
\end{equation} 

The following substantial results due to A. Knutson and E. Miller is indispensable in the proofs of our main theorems. Recall that a term order on $K[\bfX]$ is called {\bf 
antidiagonal} if  the initial term of every minor of $\bfX$ is its antidiagonal term. We will fix an antidiagonal term order $<$ and simply write $\ini(I)$, $\ini(f)$ as the initial ideal of an ideal $I$ and the leading term of an element $f$, respectively. We will call an antidiagonal term of a minor of size $r$ an {\bf antidiagonal of size $\mathbf{r}$}.
\begin{theorem}[\cite{KM05}]\label{initial}
Fix any antidiagonal term order. Then
\begin{enumerate}
\item The generators of $I_w$ in \eqref{generator} constitute a Gr{\"o}bner basis, \ie 
\[\ini (I_w) = \langle \text{antidiagonals of size }r_{p,q}(w)+1 \text{ in } \bfX_{[p,q]} :
(p,q) \in \E(w) \rangle; 
\] 
\item $\ini(I_w)$ is a Cohen--Macaulay square-free monomial ideal.
\end{enumerate}
\end{theorem}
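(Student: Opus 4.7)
The plan is to introduce the candidate initial ideal
\[ J_w = \langle \text{antidiagonals of size } r_{p,q}(w)+1 \text{ in } \bfX_{[p,q]} : (p,q) \in \E(w) \rangle, \]
which by construction satisfies $J_w \subseteq \ini(I_w)$: each listed generator is literally the initial term, under any antidiagonal term order, of a minor already in $I_w$ via \eqref{generator}. Both parts of the theorem will follow once I prove that $J_w$ is squarefree Cohen--Macaulay of height $|D(w)|$ and that the Hilbert series of $K[\bfX]/J_w$ equals that of $K[\bfX]/I_w$; since initial ideals preserve Hilbert series, these two facts together promote the above inclusion to an equality.

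To handle Cohen--Macaulayness and the codimension count, I would pass to the Stanley--Reisner simplicial complex $\Delta_w$ on the variable set $\{x_{i,j}\}$ dual to the squarefree monomial ideal $J_w$, and identify $\Delta_w$ with a \emph{subword complex}. The idea is to label the matrix position $(i,j)$ by the simple transposition $s_{i+j-1}$ and fix a sufficiently long reduced word $Q$ (for instance, the reading word of the triangular array for the longest permutation); the faces of $\Delta_w$ then correspond, via the pipe-dream dictionary, to subwords of $Q$ whose complement contains a reduced expression for $w$. Inducting on $|Q|$ by splitting on whether a chosen letter of $Q$ is used or skipped, one shows every such complex is vertex-decomposable, hence pure shellable, hence Cohen--Macaulay, of codimension $\ell(w) = |D(w)|$. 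This yields part (2) for $J_w$.

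For part (1), the shelling produces a combinatorial formula for the multigraded Hilbert series of $K[\bfX]/J_w$ as a sum over reduced pipe dreams for $w$, which recovers the Schubert polynomial $\mathfrak{S}_w$; an independent geometric calculation (for instance via the Borel-equivariant structure on $\X_w$, or via degeneration to a union of coordinate subspaces) identifies the multigraded Hilbert series of $K[\bfX]/I_w$ with the same formula. Equality of Hilbert series upgrades $J_w \subseteq \ini(I_w)$ to $J_w = \ini(I_w)$, giving (1), whence (2) transfers from $J_w$ to $\ini(I_w)$. The main obstacle is entirely combinatorial: correctly realizing $\Delta_w$ as a subword complex indexed by the essential set $\E(w)$ rather than the full diagram, together with the pipe-dream/Schubert-polynomial identity that matches the two Hilbert series — this is precisely where the Knutson--Miller framework does all its genuine work.
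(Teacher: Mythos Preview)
The paper does not prove this theorem; it is stated with attribution to \cite{KM05} and used as a black box throughout (e.g.\ in Lemmas~\ref{lem1}--\ref{lem3}, Theorem~\ref{frational}, and Lemma~\ref{diagram=essential}). There is therefore no proof in the paper to compare your proposal against.

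That said, your sketch is a faithful outline of the original Knutson--Miller argument: identify the Stanley--Reisner complex of the candidate initial ideal with a subword complex, establish vertex-decomposability (hence shellability and Cohen--Macaulayness) by induction on the word length, and then match Hilbert series via the pipe-dream formula for Schubert polynomials to upgrade the containment $J_w \subseteq \ini(I_w)$ to equality. You correctly flag the nontrivial step---the combinatorial identification of $\Delta_w$ with a subword complex and the Hilbert series match---as the place where the real work happens, and you do not pretend to carry it out. As a summary of the cited result this is accurate; as an independent proof it would of course need those details filled in, which amounts to reproducing a substantial portion of \cite{KM05}.
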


\section{$F$-rationality and $F$-injectivity}\label{frational}
Recall that in the theory of tight closure, a Noetherian ring is {\bf $F$-rational} if all its parameter ideals are tightly closed. There is a weaker notion called  $F$-injectivity. A Noetherian ring $R$ is {\bf $F$-injective} if for any maximal ideal $\frak{m}$  of $R$ the map on the local cohomology module $H^i_{\frak{m}}(R)$ induced by the Frobenius map is injective for all $i$.
We collect some facts concerning $F$-rationality and $F$-injectivity. See \cite{Hu96}, or \cite{BH93} for convenient resources.
\begin{theorem}\label{fact1} Let $R$ be a Noetherian ring.
\begin{enumerate}
\item $R$ is $F$-rational if and only if $R_{\bm}$ is $F$-rational for any maximal ideal $\bm$.
\item If $R$ is an $F$-rational ring that is a homorphic image of a Cohen--Macaulay ring, then $R_S$ is $F$-rational for any multiplicative closed set $S$ of $R$.
\end{enumerate}
\end{theorem}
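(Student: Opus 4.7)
The plan is to derive both statements from the way tight closure of parameter ideals interacts with localization, using the always-valid inclusion $I^* R_S \subseteq (IR_S)^*$ for any multiplicative set $S$.

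For (1), the ``if'' direction is the formal one. Let $I$ be a parameter ideal of $R$ and pick $x \in I^*$. Since $F$-rational rings are Cohen--Macaulay, $IR_\bm$ is again a parameter ideal of $R_\bm$, and the above inclusion puts $x/1$ into $(IR_\bm)^*$. By the hypothesis on $R_\bm$, we get $x/1 \in IR_\bm$ for every maximal ideal $\bm$, hence $x \in I$ by the local-global principle. The ``only if'' direction is more delicate: given a parameter ideal $I'$ of $R_\bm$, I would lift it to a parameter ideal $I$ of $R$ with $IR_\bm = I'$, and then invoke the fact that tight closure of parameter ideals commutes with localization at $\bm$ in the settings at hand. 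With that input, $(I')^* = I^* R_\bm = I R_\bm = I'$, so $R_\bm$ is $F$-rational.

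For (2), the main ingredient is the Hochster--Huneke theorem that tight closure of parameter ideals commutes with localization in homomorphic images of Cohen--Macaulay rings. Granted this, the argument is almost formal: let $S$ be any multiplicative set of $R$ and $J$ a parameter ideal of $R_S$. Since $R$ is Cohen--Macaulay (as $F$-rational rings are), one can lift $J$ to a parameter ideal $I$ of $R$ with $IR_S = J$. Then
\[
J^* \;=\; (IR_S)^* \;=\; I^* R_S \;=\; IR_S \;=\; J,
\]
so $J$ is tightly closed, and $R_S$ is $F$-rational.

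The main obstacle in both parts is exactly the localization behavior of tight closure of parameter ideals, which is the substantive Hochster--Huneke input; the rest is bookkeeping. Since this paper applies these facts only to coordinate rings of matrix Schubert varieties, which are quotients of polynomial rings and thus homomorphic images of regular (in particular, Cohen--Macaulay) rings, the hypothesis needed for the delicate direction of (1) and for (2) is automatically satisfied in every application we will make.
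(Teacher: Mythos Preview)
The paper does not supply a proof of this theorem at all: it is listed among ``some facts concerning $F$-rationality and $F$-injectivity'' and the reader is referred to \cite{Hu96} and \cite{BH93}. So there is no argument in the paper to compare your proposal against; any comparison is between your sketch and the standard proofs in those references.

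Your identification of the real content is correct: the nontrivial input is that tight closure of parameter ideals commutes with localization in homomorphic images of Cohen--Macaulay rings, and once that is granted the rest is bookkeeping. That said, two points in your write-up deserve tightening. First, in part~(1) you freely speak of ``a parameter ideal of $R$'' for non-local $R$ without saying what you mean; in most treatments $F$-rationality of a non-local ring is \emph{defined} by requiring it at every $R_{\mathfrak m}$ (or at every $R_{\mathfrak p}$), which makes (1) essentially definitional rather than something to be proved by lifting parameter ideals. Second, your lifting steps (``lift $I'$ to a parameter ideal $I$ of $R$ with $IR_{\mathfrak m}=I'$'', and similarly for $R_S$) are not automatic in general and you do not justify them; the cleaner route for (2) is to use (1) to reduce to showing $R_{\mathfrak p}$ is $F$-rational for every prime $\mathfrak p$, and then invoke directly the theorem (in \cite{Hu96}) that $F$-rationality passes to $R_{\mathfrak p}$ for homomorphic images of Cohen--Macaulay rings, rather than manufacturing a global parameter ideal. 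Your closing remark that the hypotheses are automatically satisfied for coordinate rings of matrix Schubert varieties is accurate and is exactly how the paper uses the statement.
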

\begin{theorem}\label{fact2} Let $(R,\bm)$ be a Noetherian local ring.
\begin{enumerate}
\item $R$ is $F$-injective if and only if $\widehat{R}$ is $F$-injective.
\item Suppose in addition $R$ is excellent, then $R$ is $F$-rational if and only if $\widehat{R}$ is $F$-rational.
\end{enumerate}
\end{theorem}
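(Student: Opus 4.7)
The plan for both parts is to exploit the flat base change isomorphism
\[
H^i_{\bm\widehat{R}}(\widehat{R})\;\cong\;H^i_{\bm}(R)\otimes_R\widehat{R}\;\cong\;H^i_{\bm}(R),
\]
where the second identification uses that $H^i_{\bm}(R)$ is $\bm$-torsion and that $R\to\widehat{R}$ is an isomorphism modulo every power of $\bm$. I would then verify that this identification intertwines the two Frobenius endomorphisms coming from $R$ and from $\widehat{R}$; this is most transparent on the \v{C}ech complex $C^{\bullet}(x_1,\dots,x_d;R)$ associated to a system of parameters for $\bm$, where Frobenius on a localization $R_{x_{i_1}\cdots x_{i_k}}$ sends $r/(x_{i_1}\cdots x_{i_k})^n$ to $r^p/(x_{i_1}\cdots x_{i_k})^{pn}$, a formula manifestly preserved by $R\to\widehat{R}$.

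For part (1), $F$-injectivity of $R$ (respectively of $\widehat{R}$) is by definition injectivity of Frobenius on each $H^i_{\bm}(R)$ (respectively on $H^i_{\bm\widehat{R}}(\widehat{R})$), so the identification above makes the two conditions literally the same statement and no further argument is required.

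For part (2), with $R$ excellent, I would invoke the standard characterization that a Cohen--Macaulay local ring $R$ is $F$-rational iff $0$ is tightly closed in the top local cohomology $H^d_{\bm}(R)$, where $d=\dim R$. Since Cohen--Macaulayness is preserved under completion, the task reduces to showing
\[
0^{*}_{H^d_{\bm}(R)}\;=\;0^{*}_{H^d_{\bm\widehat{R}}(\widehat{R})}
\]
under the identification above. The inclusion $\subseteq$ is immediate from the definition of tight closure together with flatness of $\widehat{R}$ over $R$. For $\supseteq$, I would produce a test element $c\in R^{\circ}$ that remains a test element after passing to $\widehat{R}$, and then observe that an equation $cF^e(\eta)=0$ witnessing $\eta\in 0^{*}_{H^d_{\bm\widehat{R}}(\widehat{R})}$ already holds on the $R$-side thanks to the module identification.

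The main obstacle is precisely this common test-element step: producing an element of $R^{\circ}$ that serves as a test element for both $R$ and $\widehat{R}$ is exactly where the excellence hypothesis must be used, via the Hochster--Huneke theorem that excellent reduced local rings admit test elements compatible with completion (the formal fibers being geometrically regular is what makes this work). By contrast, part (1) needs no such hypothesis, because $F$-injectivity is a first-order condition on the Frobenius endomorphism itself, with no auxiliary multiplier to track.
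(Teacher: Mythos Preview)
The paper does not prove this theorem; it is listed among ``some facts concerning $F$-rationality and $F$-injectivity'' with pointers to Huneke's lecture notes and Bruns--Herzog, so there is no in-paper argument to compare against. Your sketch is essentially the standard proof one finds in those references.

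A few points deserve care if you flesh it out. In part~(2), the characterization ``$R$ is $F$-rational iff $R$ is Cohen--Macaulay and $0^{*}=0$ in $H^{d}_{\bm}(R)$'' is not a tautology for arbitrary local rings: one typically needs $R$ to be a homomorphic image of a Cohen--Macaulay ring (equivalently, to possess a dualizing complex), and this is exactly what excellence supplies. Second, for the ``easy'' inclusion $0^{*}_{H^{d}_{\bm}(R)}\subseteq 0^{*}_{H^{d}_{\bm\widehat{R}}(\widehat{R})}$ you are implicitly using $R^{\circ}\subseteq\widehat{R}^{\circ}$; this follows from going-down for the faithfully flat map $R\to\widehat{R}$, but it is worth saying. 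Finally, the Hochster--Huneke theorem you invoke produces a \emph{completely stable} test element for an excellent \emph{reduced} local ring; you should note that $F$-rationality of either $R$ or $\widehat{R}$ forces both to be reduced (indeed normal), so the reducedness hypothesis is available whichever direction of the equivalence you are proving. With these caveats your outline is correct.
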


\begin{theorem}\label{fact3} Let $R$ be a positive graded $K$-algebra, where $K$ is a field of positive characteristic. Let $\bm$ be the unique maximal graded ideal of $R$.
\begin{enumerate}
\item $R$ is $F$-injective if and only if $R_{\bm}$ is $F$-injective.
\item $R$ is $F$-rational if and only if $R[T]$ is $F$-rational for any variable $T$ over $R$.
\item Suppose in addition that $K$ is perfect. Then $R$ is $F$-rational if and only if $R_{\bm}$ is $F$-rational.
\end{enumerate}
\end{theorem}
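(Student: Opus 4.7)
The three parts are standard facts about $F$-singularities on positively graded $K$-algebras, and my strategy is to treat them uniformly using the torus $K^{*}$-action on $R$ defined by $\lambda \cdot r_d = \lambda^d r_d$ for homogeneous $r_d$ of degree $d$. This action has $\bm$ as the unique closed fixed point, and by positivity of the grading $\bm$ lies in the closure of every $K^{*}$-orbit in $\Spec R$. Both the localization-at-a-maximal-ideal and the polynomial-ring questions can be controlled by this observation.

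For part (1), the ``only if'' direction is immediate from the definition of $F$-injectivity. For ``if'', I would first note that $F$-injectivity of the localization at a maximal ideal is $K^{*}$-equivariant: $R_{\mathfrak{n}}$ is $F$-injective iff $R_{\lambda \cdot \mathfrak{n}}$ is, for any $\lambda \in K^{*}$. For an arbitrary maximal ideal $\mathfrak{n} \neq \bm$, I would pick a homogeneous element $f \in \bm \setminus \mathfrak{n}$ and relate $F$-injectivity of $R_{\mathfrak{n}}$ to that of $R_{\bm}$ through the graded localization $R[1/f]$, which is a flat extension of its degree-zero subring and of which $R_{\mathfrak{n}}$ is a further localization. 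The net effect is that $F$-injectivity at $\bm$ spreads throughout $\Spec R$.

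For part (2), the reverse direction ($R[T]$ $F$-rational implies $R$ $F$-rational) uses that $R = R[T]/(T)$ with $T$ a non-zero-divisor, combined with the retraction $R[T] \to R$ sending $T \mapsto 0$, to descend $F$-rationality from $R[T]$ to $R$. The forward direction is the substantive content: I would adapt a standard tight-closure argument showing that a system of parameters for $R[T]$ can be taken to include $T$, together with the faithful flatness of $R \hookrightarrow R[T]$, to reduce tight-closure containments in $R[T]$ to corresponding containments in $R$, where the $F$-rationality hypothesis on $R$ closes the argument.

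For part (3), ``only if'' is immediate. For ``if'', perfectness of $K$ ensures $R$ is $F$-finite, so that the $F$-rational locus of $\Spec R$ is open. Since this locus is $K^{*}$-stable and contains $\bm$, and since $\bm$ lies in the closure of every $K^{*}$-orbit, the locus must be all of $\Spec R$; hence $R_{\mathfrak{p}}$ is $F$-rational at every prime $\mathfrak{p}$, which gives $F$-rationality of $R$. The main obstacle I anticipate is the forward direction of (2) --- preservation of $F$-rationality under polynomial extension --- which requires a nontrivial tight-closure base-change argument and cannot be reduced formally; by contrast, the $K^{*}$-orbit arguments in (1) and (3) are technically cleaner once the appropriate openness and graded-localization lemmas are in hand.
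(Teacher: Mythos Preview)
The paper does not prove this theorem. Theorem~\ref{fact3} (together with Theorems~\ref{fact1} and~\ref{fact2}) is stated in Section~\ref{frational} as a collected fact from tight closure theory, with a blanket reference to \cite{Hu96} and \cite{BH93}; no argument is given. There is therefore no ``paper's own proof'' to compare your proposal against.

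On the merits of your sketch: the $K^{*}$-action approach to parts (1) and (3) is standard and sound; in particular, using perfectness of $K$ to obtain $F$-finiteness, hence openness of the $F$-rational locus, and then arguing that a $K^{*}$-stable open set containing $\bm$ must be all of $\Spec R$, is exactly the right mechanism for (3).

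Your reverse direction of (2), however, is phrased in a way that suggests a genuine gap. Writing ``$R = R[T]/(T)$ with $T$ a non-zero-divisor \dots\ to descend $F$-rationality'' is misleading: $F$-rationality does \emph{not} pass to quotients by regular elements (for instance, $F$-rational rings are normal, and normality is already lost under such quotients). The retraction you mention is the salvageable ingredient, but not via the quotient: the correct argument is to take a system of parameters $x_{1},\dots,x_{d}$ for $R_{\bm}$, observe that $x_{1},\dots,x_{d},T$ is a system of parameters for $R[T]_{(\bm,T)}$, use $F$-rationality of $R[T]$ to conclude $(x_{1},\dots,x_{d},T)$ is tightly closed there, and then contract back to $R$ using the $R$-linear splitting $R[T]\to R$ (so that $(x_{1},\dots,x_{d},T)R[T]\cap R=(x_{1},\dots,x_{d})R$). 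Alternatively one invokes smooth base change for tight closure (Hochster--Huneke). Either way, the step ``mod out by $T$'' is not what carries the argument, and as written your justification for this direction would fail.
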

\section{Matrix Schubert varties are $F$-rational} \label{msvarefrational}
Fix an antidiagonal term order. Denote $J_w = \ini(I_w)$ the initial ideal of $I_w$.
In this section, the ground field $K$ is perfect  and of positive characteristic.
As mentioned in the introduction, consider $c:=x_{i_0,w(i_0)}$ where $i_0$ 
is the smallest number such that
$\{ (p,q) \mid p>i_0, q > w(i_0) \} \cap \E_{>0}(w) \neq \emptyset.$
Note that such $i_0$ exists exactly when $\E_{>0}(w) \neq \emptyset$ (or equivalently $R_w$ is not regular).
We make this assumption (existence of $c$) for Lemmas \ref{lem1}, \ref{lem2}, \ref{lem3} and set $(p_0,q_0)=(i_0,w(i_0))$.
Note also that for this particular choice of $i_0$,
\begin{equation}\label{D=0}
\{(p,q) \ne (p_0,q_0) \mid p \le p_0,q \le q_0 \} \subseteq [\{(p,q)\mid p \le p_0 \} \cap D(w)]  \subseteq D_{=0}(w).\end{equation}
 In particular, the only nonzero entry in $w_{[p_0,q_0]}$ is $(p_0,q_0)$.

In the following, we use the notation $[p_1, \dots, p_t\mid q_1, \dots, q_t]$ to denote the size $t$ minor of the submatrix of $\bfX$ involving the rows of indices $p_1, \dots, p_t$ and the columns of indices $q_1, \dots, q_t$.
\begin{lemma}\label{lem1} Let $\Delta$ be any minor in $\bfX$ such that $c\mid \ini \Delta$. Then 
$\Delta \in \langle c \rangle + J_w$ and hence so is $\Delta - \ini \Delta $.
\end{lemma}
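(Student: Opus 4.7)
My plan is to express $\Delta$ as the determinant of a $t \times t$ submatrix of $\bfX$ whose upper-left rectangular block vanishes modulo $\langle c \rangle + J_w$, and then conclude by a purely dimensional argument that $\Delta$ itself vanishes modulo this ideal.

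First I would fix notation $\Delta = [p_1, \ldots, p_t \mid q_1, \ldots, q_t]$ with $p_1 < \cdots < p_t$ and $q_1 < \cdots < q_t$, so that the antidiagonal term is $\ini \Delta = \prod_{i=1}^{t} x_{p_i, q_{t+1-i}}$. The hypothesis that $c = x_{p_0,q_0}$ divides this monomial forces a unique index $k$ with $p_k = p_0$ and $q_{t+1-k} = q_0$; I would set $s := t + 1 - k$, so that $q_s = q_0$. Next I would isolate the upper-left $k \times s$ subblock of the underlying submatrix, whose entries $x_{p_i, q_j}$ all satisfy $p_i \le p_0$ and $q_j \le q_0$. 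The sole entry at $(p_k, q_s) = (p_0, q_0)$ is $c$ itself; every other entry of the block corresponds to a position $(p_i, q_j) \ne (p_0, q_0)$ with $p_i \le p_0$ and $q_j \le q_0$, hence lies in $D_{=0}(w)$ by \eqref{D=0}. Each such $x_{p_i,q_j}$ is then a size-$1$ minor of $\bfX_{[p_i,q_j]}$ and belongs to $I_w$ by \eqref{generator}; since it coincides with its own initial term, it also lies in $J_w$.

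Modulo $\langle c \rangle + J_w$ the entire upper-left $k \times s$ block therefore becomes zero. Because $k + s = t + 1 > t$, a direct count in the Leibniz expansion (the first $k$ rows can only contribute from the $t - s = k - 1 < k$ columns lying outside the block, which is impossible) forces $\Delta \equiv 0 \pmod{\langle c \rangle + J_w}$; equivalently, the first $k$ rows of the reduced matrix lie in a $(k-1)$-dimensional coordinate subspace and so are linearly dependent. The final assertion is then immediate: $\ini \Delta$ is a monomial divisible by $c$, hence $\ini \Delta \in \langle c \rangle$, so $\Delta - \ini \Delta \in \langle c \rangle + J_w$ as well. I do not foresee a real obstacle; the substantive content of the lemma is entirely compressed into the containment \eqref{D=0} that was engineered by the minimal choice of $i_0$, and once one recognises that the upper-left quadrant relative to $(p_0,q_0)$ consists of $D_{=0}$ positions, the remainder is a textbook rank argument powered by the size inequality $k + s > t$.
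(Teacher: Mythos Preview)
Your argument is correct, and it is more direct than the paper's. The paper proceeds by induction on the number of rows of $\Delta$ lying strictly above $p_0$: in the base case and in the inductive step it expands $\Delta$ along the top row and uses \eqref{D=0} to see that each of the top-row entries in the columns $\le q_0$ already lies in $J_w$, while the remaining cofactors are covered by the inductive hypothesis. You bypass the induction entirely by observing once and for all that the whole upper-left $k\times s$ block of the submatrix vanishes modulo $\langle c\rangle + J_w$, and then invoking the pigeonhole/rank fact that a $t\times t$ determinant with a vanishing $k\times s$ block and $k+s>t$ must vanish. Both proofs rest on exactly the same input, namely the containment \eqref{D=0} coming from the minimal choice of $i_0$; yours packages the consequence more cleanly, while the paper's row-expansion approach makes slightly more visible how the individual cofactors land in $J_w$ versus $\langle c\rangle$.
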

\begin{proof}
Write $\Delta=\left[ p_t,\dots,p_1,p_0,p_1',\dots,p_s' \mid
q_s',\dots,q_1',q_0,q_1,\dots,q_t \right]$, so \[
\ini \Delta = \left(\prod^s_{i=1}x_{p_i',q_i'}\right)\cdot c \cdot \left(\prod^t_{j=1} x_{p_j,q_j}\right),
\] where $p_s' > \dots >p'_1>p_0 > p_1>\dots>p_t$ and $q_s'<\dots<q_1'<q_0<q_1<\dots<q_t$.

 Use induction on $t$. When $t=0$, then $\Delta=\left[ p_0,p_1',\dots,p_s' \mid q_s',\dots,q_1',q_0 \right]$. If $s=0$, $\Delta=c= \ini \Delta$ is obviously in $\langle c\rangle +J_w$. Suppose $s>0$, 
Expanding $\Delta$ along the first row, we get
$$\begin{aligned}
\Delta 
=\, &(-1)^s c \left[ p_1',\dots,p_s' \mid q_s',\dots,q_1'  \right] +\\
& \sum_{i=1}^s (-1)^{i+1}x_{p_0,q_i'} [ p_1', \dots, p_s' \mid q_s', \dots, \widehat{q_i'}, \dots, q_1', q_0 ].\end{aligned}
$$
Since $0=r_{p_0,q_1'}(w)=\dots=r_{p_0,q_s'}(w)$ by \eqref{D=0},
$ \langle x_{p_0,q'_1},\dots,x_{p_0,q'_s} \rangle \subseteq J_w
$ and hence $\Delta \in  \langle c \rangle +J_w$. 

Suppose $t>0$. 
For $1 \leq i \leq t$, set \[
\Delta_i = \left[ p_{t-1},\dots,p_1,p_0,p_1',\dots,p_s' \mid
q_s',\dots,q_1',q_0,q_1,\dots,\widehat{q_i},\dots,q_t \right].
\]
Note that $c \mid \ini \Delta_i$ for $ 1 \le i \le t$, since $c$ is on the antidiagonal of $\Delta_i$ (the row deleted is above $c$ and the column deleted is to the right of $c$). So $\langle \Delta_i \mid 1 \le i \le t \rangle \subseteq \langle c\rangle +J_w$ by the inductive hypothesis. Again, expanding $\Delta$ along the first row, we see that 
\[\Delta  \in \langle \Delta_i \mid 1 \le i \le t \rangle + \langle x_{p_t,q_j'} \mid j=1,\dots ,s \rangle + \langle x_{p_t,q_0}\rangle.
\]
Once again $ [\langle x_{p_t,q_j'}\mid j=1,\dots ,s \rangle + \langle x_{p_t,q_0}\rangle ] \subseteq J_w$, since $0=r_{p_t,q_1'}(w)=\dots=r_{p_t,q_s'}(w)=r_{p_t,q_0}(w)$ by \eqref{D=0}. Therefore, $\Delta \in \langle c \rangle + J_w$ as desired.
\end{proof}
\begin{lemma}\label{lem2}
$\ini(\langle c \rangle +I_w) = \langle c \rangle + J_w.$
\end{lemma}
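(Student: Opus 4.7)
The inclusion $\langle c \rangle + J_w \subseteq \ini(\langle c \rangle + I_w)$ is immediate, since $c = \ini(c) \in \ini(\langle c \rangle + I_w)$ and $J_w = \ini(I_w) \subseteq \ini(\langle c \rangle + I_w)$. For the reverse inclusion, I plan to prove that $\tilde{\mathcal{G}} := \{c\} \cup \mathcal{G}$ is a Gr\"obner basis of $\langle c \rangle + I_w$, where $\mathcal{G}$ denotes the antidiagonal Gr\"obner basis of $I_w$ from Theorem~\ref{initial}; granting this, $\ini(\langle c \rangle + I_w) = \langle \ini(g) : g \in \tilde{\mathcal{G}} \rangle = \langle c \rangle + J_w$.

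To verify the Gr\"obner basis property I would apply Buchberger's criterion. The S-polynomials among elements of $\mathcal{G}$ already reduce to zero by Theorem~\ref{initial}, so only $S(c, \Delta)$ for $\Delta \in \mathcal{G}$ needs attention. If $\gcd(c, \ini \Delta) = 1$, then $S(c, \Delta) = c(\ini \Delta - \Delta) \in \langle c \rangle$, and successive reductions by $c$ alone (each removing one monomial without introducing new ones) send it to $0$. If instead $c \mid \ini \Delta$, then $S(c, \Delta) = \ini \Delta - \Delta$, and Lemma~\ref{lem1} tells us precisely that $\Delta - \ini \Delta$ lies in the monomial ideal $\langle c \rangle + J_w$; consequently every monomial of $S(c, \Delta)$ is divisible either by $c$ or by some $\ini \Delta'$ with $\Delta' \in \mathcal{G}$, which is exactly what is needed to initiate the division algorithm.

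The main obstacle is to verify that the division algorithm in this second case actually terminates at $0$ rather than producing a nonzero remainder. My plan is to follow the strategy: reduce by $c$ whenever the current leading monomial is divisible by $c$ (removing a single monomial), and otherwise reduce by a generator $\Delta' \in \mathcal{G}$ whose leading term divides that monomial. When $c \mid \ini \Delta'$, iteratively applying Lemma~\ref{lem1} to $\Delta'$ ensures that the tail of $\Delta'$ again belongs to $\langle c \rangle + J_w$, so the new monomials introduced remain in this monomial ideal; when $c \nmid \ini \Delta'$, the current leading monomial must itself sit in $J_w$ (as it is in $\langle c \rangle + J_w$ but not divisible by $c$), and the reduction step mirrors the standard Gr\"obner reduction in $I_w$. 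Careful book-keeping of these reductions confirms $S(c, \Delta)$ reduces to $0$ modulo $\tilde{\mathcal{G}}$, completing the verification of Buchberger's criterion and hence the lemma.
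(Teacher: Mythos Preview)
Your overall strategy---verifying that $\{c\}\cup\mathcal G$ is a Gr\"obner basis via Buchberger's criterion---is a legitimate alternative to the paper's direct argument, and the easy inclusion together with the coprime case are handled correctly. The paper instead argues elementwise: given $h=cf-g\in\langle c\rangle+I_w$ it analyses where $\ini(h)$ can come from in the expansion $g=\sum m_i\Delta_i$ and invokes Lemma~\ref{lem1} only once, on a single $\Delta_{i_0}$. Your route would have the advantage of exhibiting an explicit Gr\"obner basis of $\langle c\rangle+I_w$, but as written it does not yet reach the goal.

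The gap is in your final paragraph. Knowing that every monomial of $S(c,\Delta)=\ini\Delta-\Delta$ lies in the monomial ideal $\langle c\rangle+J_w$ is \emph{not} the same as knowing that $S(c,\Delta)$ reduces to $0$ modulo $\{c\}\cup\mathcal G$. When the leading monomial is in $J_w$ but not divisible by $c$ and you reduce by some $\Delta'\in\mathcal G$ with $c\nmid\ini\Delta'$, the tail monomials you introduce are $\tfrac{\ini p}{\ini\Delta'}\cdot(\text{non-leading terms of }\Delta')$, and there is no reason these lie in $\langle c\rangle+J_w$. The phrase ``mirrors the standard Gr\"obner reduction in $I_w$'' does not help: the polynomial you are reducing is not known to lie in $I_w$, so the Gr\"obner property of $\mathcal G$ for $I_w$ gives no control on the remainder. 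Thus your invariant can break at exactly this step, and ``careful book-keeping'' is not a substitute for an argument.

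A clean fix is available if you look back at the \emph{proof} of Lemma~\ref{lem1} rather than just its statement: the cofactor expansions there actually show the stronger fact
\[
\Delta\ \in\ \langle c\rangle\ +\ \big\langle\, x_{a,b}\ :\ (a,b)\in D_{=0}(w),\ a\le p_0,\ b\le q_0,\ (a,b)\neq(p_0,q_0)\,\big\rangle,
\]
where every $x_{a,b}$ on the right is a degree-one generator of $I_w$ and hence lies in $\mathcal G$. Since a set of distinct variables is a Gr\"obner basis of the ideal it generates, $S(c,\Delta)=\ini\Delta-\Delta$ reduces to $0$ already modulo the subset $\{c\}\cup\{x_{a,b}\}\subseteq\{c\}\cup\mathcal G$, and Buchberger's criterion is verified. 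With this observation your approach goes through; without it, the argument is incomplete.
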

\begin{proof}
The containment $\ini(\langle c \rangle +I_w) \supseteq \langle c \rangle + J_w$ is obvious.
Conversely, let $cf-g \in \langle c \rangle +I_w$ for some $f \in K[\bfX]$ and $g \in I_w$. If $\ini(cf)\neq \ini(g)$, then $\ini(cf-g)= \ini(cf)$ or $\ini(-g)$. In either case, $\ini(cf-g)\in \langle c \rangle + J_w.$ 

So we may assume that 
$\ini(cf)=\ini(g)$. Write $g=m_1\Delta_1 +m_2\Delta_2+\cdots+m_u\Delta_u$ where the $m_i$'s are monomial elements in $K[\bfX]$ and the $\Delta_i$'s are minors in the generating set of $I_w$.
If $\ini(cf-g)$ is a term in $cf$, then $\ini(cf-g) \in \langle c\rangle$. Also, if $\ini(cf-g)= \ini(m_i \Delta_i)$ for some $i$, then $\ini(cf-g)=m_i\dot \ini(\Delta_i) \in J_w$.
Therefore, we may assume that $\ini(cf-g)$ is a term of $m_{i_0}\Delta_{i_0}$ for some $i_0$ and that $\ini(cf-g)$ is neither $\ini(m_{i_0}\Delta_{i_0})$ nor a term of $cf$. This implies that $\ini(m_{i_0}\Delta_{i_0})$ is a term of $cf$ and hence $c \mid \ini(m_{i_0}\Delta_{i_0})=m_{i_0} \dot \ini(\Delta_{i_0})$. If $c \mid m_{i_0}$, then $\ini(cf-g) \in \langle c \rangle$ since it is a term of $m_{i_0}\Delta_{i_0}$.
Otherwise, $c \mid \ini(\Delta_{i_0})$. Then by Lemma~\ref{lem1}, $\Delta_{i_0}- \ini(\Delta_{i_0}) \in \langle c \rangle + J_w$. Therefore, $m_{i_0}\Delta_{i_0}- \ini(m_{i_0}\Delta_{i_0}) \in \langle c \rangle +J_w$. Now, since
$\ini(cf-g)$ is a term of $m_{i_0}\Delta_{i_0}- \ini(m_{i_0}\Delta_{i_0})$ and since $\langle c \rangle +J_w$ is a monomial ideal, we conclude that $\ini(cf-g) \in \langle c \rangle +J_w$.
\end{proof}
\begin{lemma}\label{lem3}
$R_w/cR_w$ is $F$-injective.
\end{lemma}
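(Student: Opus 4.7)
The plan is to exploit Lemma~\ref{lem2} to pass to a Stanley-Reisner ring, where $F$-purity (hence $F$-injectivity) is automatic, and then transfer $F$-injectivity back to $R_w/cR_w$ via Gr\"obner degeneration.

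First I would verify that $R_w/cR_w$ is Cohen--Macaulay. Since $(p_0,q_0)=(i_0,w(i_0))$ is a nonzero entry of $w$, it does not lie in $D(w)$, so $c = x_{p_0,q_0} \notin I_w$. Because $R_w$ is a Cohen--Macaulay domain (as recalled in the introduction), $c$ is a non-zerodivisor on $R_w$, and hence $R_w/cR_w$ is Cohen--Macaulay of dimension $\dim R_w - 1$.

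Next I would analyze the initial ideal quotient. By Lemma~\ref{lem2} and Theorem~\ref{initial}(2), the ideal $\ini(\langle c\rangle + I_w) = \langle c\rangle + J_w$ is squarefree monomial, so the Stanley-Reisner ring $K[\bfX]/(\langle c\rangle + J_w)$ is automatically $F$-pure, and therefore $F$-injective. Using Hilbert function invariance under Gr\"obner degeneration together with the previous step, $K[\bfX]/(\langle c\rangle + J_w)$ has dimension $\dim R_w - 1$; combined with the Cohen--Macaulayness of $K[\bfX]/J_w$ (Theorem~\ref{initial}(2)), this forces $c$ to be a non-zerodivisor on $K[\bfX]/J_w$, so that the Stanley-Reisner quotient is itself Cohen--Macaulay.

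The final step is to transfer $F$-injectivity from the initial ideal quotient back to $R_w/cR_w$. Viewing $R_w/cR_w$ and $K[\bfX]/(\langle c\rangle + J_w)$ as the generic and special fibers of the standard flat family associated with the Gr\"obner degeneration over $\Spec K[t]$, one applies the upper semi-continuity / transfer principle (following the techniques in \cite{CH97}) asserting that $F$-injectivity of the $F$-pure, Cohen--Macaulay special fiber propagates to the Cohen--Macaulay generic fiber. This is the main obstacle: Cohen--Macaulayness of both fibers reduces $F$-injectivity to injectivity of Frobenius on the top local cohomology module, but carefully comparing this action across the Gr\"obner degeneration is the technical heart of the lemma and is where the structural results of \cite{KM05} and \cite{CH97} must be combined.
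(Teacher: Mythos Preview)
Your proposal is correct and follows the same overall route as the paper: both arguments reduce, via Lemma~\ref{lem2}, to showing that the Stanley--Reisner ring $K[\bfX]/(\langle c\rangle + J_w)$ is Cohen--Macaulay and $F$-injective, and then invoke \cite[Theorem~2.1]{CH97} to transfer $F$-injectivity back across the Gr\"obner degeneration.

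The one substantive difference is in how you establish that $c$ is a non-zerodivisor on $K[\bfX]/J_w$. The paper proves this by a direct combinatorial argument: it shows that whenever an antidiagonal generator $D$ of $J_w$ is divisible by $c$, the quotient $D/c$ already lies in $J_w$, using the explicit rank information encoded in \eqref{D=0}. Your argument instead observes that $R_w$ is a domain (so $\dim R_w/cR_w = \dim R_w - 1$), passes this dimension across the Gr\"obner degeneration via Hilbert-function invariance, and then uses that a homogeneous non-unit in a graded Cohen--Macaulay ring is a non-zerodivisor precisely when modding out by it drops the dimension. Your route is slicker and avoids any combinatorics, at the price of importing the irreducibility of $\X_w$; the paper's route is more self-contained but longer.

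One small framing issue: your final paragraph describes the transfer of $F$-injectivity across the degeneration as ``the main obstacle'' and ``the technical heart of the lemma.'' In fact this step is exactly the content of \cite[Theorem~2.1]{CH97}, which the paper simply cites as a black box; no further work combining \cite{KM05} and \cite{CH97} is required at that point. The genuine work in the lemma is the verification that $\langle c\rangle + J_w$ is Cohen--Macaulay, which you have already handled.
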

\begin{proof} 
By Theorem 2.1 in \cite{CH97}, it suffices to show that $K[\bfX]/\ini(\langle c \rangle + I_w)$ is Cohen--Macaulay and $F$-injective. By Lemma~\ref{lem2}, $\ini(\langle c \rangle +I_w) = \langle c \rangle + J_w.$ Also, by Theorem~\ref{initial}{\it (2)} $J_w$ is a Cohen--Macaulay square-free monomial ideal. So $\langle c \rangle +J_w$ is also a square-free monomial ideal, and hence $K[\bfX]/(\langle c \rangle + J_w)$ is $F$-injective by the discussion in the paragraph before Corollary of \cite{CH97} involving Fedder's criterion. The Cohen--Macaulayness of $K[\bfX]/(\langle c \rangle + J_w)$ follows from that fact the $c$ is a non zero-divisor on $K[\bfX] / J_w$. 

To see this, first note that $c=x_{i_0,w(i_0)}=x_{p_0,q_0} \notin J_w$. Suppose for some $z \in K[\bfX]$ we have $cz \in J_w$. We will show that $z \in J_w$. By Theorem~\ref{initial}, we may assume $z$ is a monomial and $cz = r D$ for some monomial $r \in K[\bfX]$ and some antigonal $D \in J_w$.
If $c \mid r $, then $z = {r \over c}D \in J_w$. Therefore, we may assume $c \nmid r$. Then $c \mid D$. We finish the proof by showing that ${D \over c} \in J_w$.

Write \[ D= \left( \prod^s_{i=1} x_{p_i',q_i'} \right)\cdot c \cdot \left( \prod^t_{j=1} x_{p_j,q_j} \right)
\] where $p1' > p_2'>\dots > p_s'>p_0>p_1 >\dots >p_t$ and $q_1'<q_2'<\dots
<q_s'<q_0<q_1<\dots<q_t$. Since $c=x_{p_0,q_0} \notin J_w$, either $s>0$ or $t>0$. Note also that $D$ is of size $(s+t+1)$, so  $r_{p_1',q_t}(w) \leq s+t$ by Theorem~\ref{initial}{\it (1)}. On the other hand, as mentioned before  the only nonzero entry in $w_{[p,q]}$ is $(p_0,q_0)$, so
\[
\begin{aligned} &\left\{ \text{nonzero entries in }w_{[p'_1,q_t]} \right\} \\
&\supseteq \{ (p_0,q_0) \} \bigsqcup \left\{ \text{nonzero entries in }w_{[p_1,q_t]}\right\}
\bigsqcup \left\{ \text{nonzero entries in }w_{[p'_1,q_s']}\right\}.
\end{aligned}
\]
Therefore, $1+r_{p_1,q_t}(w)+r_{p'_1,q'_s}(w) \leq r_{p'_1,q_t} \leq s+t$ and hence either $r_{p_1,q_t}(w) < t$ or $r_{p'_1,q'_s}(w) <s$.
We conclude that either $\prod^s_{i=1} x_{p_i',q_i'} \in J_w$ or $\prod^t_{i=1} x_{p_i,q_i} \in J_w$, so ${D\over c} \in J_w$.
\end{proof}
\begin{theorem} \label{frational}
$R_w = K[\bfX]/I_w$ is $F$-rational.
\end{theorem}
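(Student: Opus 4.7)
The plan is to apply Theorem~\ref{ch97tool} with $c = x_{p_0, q_0}$ (as defined just before Lemma~\ref{lem1}), proceeding by induction on $|\E_{>0}(w)|$. The base case is $\E_{>0}(w) = \emptyset$: by~\eqref{generator}, $I_w$ is generated by the variables sitting inside $\bfX_{[p,q]}$ for $(p,q) \in \E_{=0}(w)$, so $R_w$ is a polynomial ring over $K$ and therefore $F$-rational. For the inductive step I assume $\E_{>0}(w) \neq \emptyset$. Since $K$ is perfect and $R_w$ is a positively graded Cohen--Macaulay $K$-algebra, Theorem~\ref{fact3}(3) reduces $F$-rationality of $R_w$ to that of the localization $(R_w)_{\bm}$ at the graded maximal ideal, and Theorem~\ref{fact2}(2) reduces it further to the completion $\widehat{(R_w)_{\bm}}$, to which Theorem~\ref{ch97tool} applies.

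The hypotheses of Theorem~\ref{ch97tool} I would then verify are: $R_w$ is Cohen--Macaulay (known from~\cite{KM05}); $c$ is a nonzerodivisor, because $R_w$ is a domain and $c \notin I_w$ (indeed $c \notin J_w = \ini(I_w)$, as already noted inside the proof of Lemma~\ref{lem3}); $R_w/cR_w$ is $F$-injective, which is precisely Lemma~\ref{lem3} combined with Theorem~\ref{fact3}(1) to pass to the local ring; and $R_w[1/c]$ is $F$-rational, which is the inductive step and is the main obstacle.

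To establish $F$-rationality of $R_w[1/c]$, the strategy I would pursue is to exploit the invertibility of $c = x_{p_0, q_0}$ to clear row $p_0$ and column $q_0$ by the change of variables
\[
y_{i,j} = x_{i,j} - c^{-1}\, x_{i, q_0}\, x_{p_0, j}, \qquad i \ne p_0,\ j \ne q_0,
\]
while keeping $c$, the entries $x_{i, q_0}$ and $x_{p_0, j}$ unchanged. In these coordinates $\bfX$ becomes block-diagonal, with a $1 \times 1$ block $c$ and an $(l-1) \times (m-1)$ block $\bfY = (y_{i,j})$. Combining this with relation~\eqref{D=0} and the fact that $(p_0, q_0)$ is the unique nonzero entry of $w$ inside $w_{[p_0, q_0]}$, the rank conditions cutting out $\X_w$ should translate into those of the MSV $\X_{w'}$ for the partial permutation $w'$ obtained by deleting row $p_0$ and column $q_0$ from $w$, giving an isomorphism
\[
R_w[1/c] \;\cong\; R_{w'} \otimes_K K[\,c^{\pm 1},\, x_{i, q_0},\, x_{p_0, j} : i \ne p_0,\ j \ne q_0\,],
\]
with $|\E_{>0}(w')| < |\E_{>0}(w)|$. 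The inductive hypothesis then yields that $R_{w'}$ is $F$-rational; Theorem~\ref{fact3}(2) propagates $F$-rationality through the polynomial extension and Theorem~\ref{fact1}(2) through the localization at $c$, completing the induction. The delicate point to check carefully is the combinatorial claim that the rank conditions defining $\X_w$ truly become those of $\X_{w'}$ after the block-diagonalization and that $|\E_{>0}|$ strictly drops; this is where the minimality of $i_0$ and \eqref{D=0} are essential.
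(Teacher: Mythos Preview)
Your overall architecture is exactly that of the paper: reduce to the completion, feed Lemma~\ref{lem3} and Theorem~\ref{ch97tool} the element $c=x_{p_0,q_0}$, and identify $R_w[1/c]$ with a polynomial/Laurent extension of a smaller matrix Schubert coordinate ring $R_{w'}$ via the row/column-clearing substitution $y_{i,j}=x_{i,j}-c^{-1}x_{i,q_0}x_{p_0,j}$. The paper carries out precisely this computation (its equation $I=I'$) in some detail.

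There is, however, a genuine gap in your induction. You induct on $|\E_{>0}(w)|$ and assert that $|\E_{>0}(w')|<|\E_{>0}(w)|$, but this is false in general. Take $w=2143\in S_4$. Then $\E_{>0}(w)=\{(3,3)\}$ with $r_{3,3}(w)=2$, and $(p_0,q_0)=(1,2)$. Deleting row~$1$ and column~$2$ gives $w'=132\in S_3$, for which $\E_{>0}(w')=\{(2,2)\}$ with $r_{2,2}(w')=1$; thus $|\E_{>0}(w')|=|\E_{>0}(w)|=1$. What does drop is the rank at each essential box with $q>q_0$ (by one), and the matrix size; the paper exploits the latter by first reducing to an honest permutation via~\eqref{reducetopermutation} and then inducting on $n$, which decreases by one at every step. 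Replacing your parameter by $n$ (or, if you prefer, by $\sum_{(p,q)\in\E_{>0}(w)} r_{p,q}(w)$) repairs the argument.

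A smaller inaccuracy: your displayed isomorphism adjoins \emph{all} $x_{i,q_0}$ and $x_{p_0,j}$ with $i\ne p_0$, $j\ne q_0$ as free variables, but by~\eqref{D=0} the ones with $i<p_0$ or $j<q_0$ already lie in $I_w$ and hence vanish in $R_w$. The correct statement (as in the paper) only adjoins the variables $x_{p,q}$ on the cross $\Gamma$ with $p\ge p_0$ and $q\ge q_0$.
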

\begin{proof}
By \eqref{reducetopermutation} and Theorem~\ref{fact3}{\it (2)}, we may assume that $w \in S_n$ is a permutation. Use induction on $n$.
If $R_w$ is regular (this includes the cases $n=1,2$), then it is $F$-rational.
Suppose $n>2$ and $R_w$ is not regular. Then the element $c= x_{p_0,q_0}$ described as above exists. 
By Lemma~\ref{lem3}, $R_w / cR_w$ is $F$-injective. Hence, $\widehat{R_w}/c\widehat{R_w}$ is $F$-injective by Theorem~\ref{fact2}{\it (1)} and \ref{fact3}{\it (1)}. So by Theorem~\ref{ch97tool}, \ref{fact2}{\it (2)}, and \ref{fact3}{\it (3)}, it suffices to show that $R_w[{1 \over c}]$ is $F$-rational.

For $(p,q) \notin \Gamma=  \{ (p,q) \mid p=p_0 \text{ or } q=q_0 \}$, consider the change of variables
\[ x_{p,q}' = x_{p,q} -c^{-1}x_{p,q_0}x_{p_0,q}. 
\] 
Set $\bfX' = \left(x'_{p,q}\mid  (p,q) \notin \Gamma \right)$. Then
\[K[ \bfX ] \left[ {1\over c}\right] = K[\bfX'][x_{p,q} \mid (p,q) \in \Gamma ] \left[ {1 \over c}\right] \]
in the field of fraction of $K[\bfX]$.\\
Let $w'$ be the permutation obtained by deleting  the $p_0$th row and the $q_0$th column of $w$ and let $I_{w'}$ be the corresponding Schubert determinantal ideal in $K[\bfX']$.
Denote $I= I_w \cdot K[\bfX]\left[ {1\over c}\right]$ the extended ideal of $I_w$ and set
\[ I' = \left[ I_{w'}+ \langle x_{p,q} \mid (p,q) \in \Gamma, p<p_0 \text{ or } q<q_0 \rangle \right] \cdot 
K[\bfX'][x_{p,q} \mid (p,q) \in \Gamma ] \left[ {1 \over c}\right] \]
We claim that 
\begin{equation}\label{I=I'}
I=I' \end{equation}
It follows from \eqref{I=I'} that
\[  {K[\bfX] \over I_w} \left[ {1 \over c} \right] = {K[\bfX'] \over I_{w'}} \left[x_{p,q}\mid (p,q) \in \Gamma ,
p\ge p_0 \text{ and }q\ge q_0\right] \left[ {1 \over c} \right] \]
By inductive hypothesis, $K[\bfX']/I_{w'}$ is $F$-rational. So Theorem~\ref{fact1}{\it (2)} and Theorem~\ref{fact3}{\it (2)} imply that $R_w\left[ {1 \over c} \right]$ is $F$-rational. Therefore, it suffices to prove \eqref{I=I'}.

We prove \eqref{I=I'} by showing that the generators of $I$ belongs to $I'$ and conversely.
First observe that
\begin{itemize}
\item[(a)] For $(p,q) \in \Gamma$ satisfying $p <p_0$ or $q<q_0$, by \eqref{D=0} $x_{p,q} \in I_w$.
\item[(b)] Fix a $(p,q)$ satisfying $p <p_0$ or $q<q_0$. Let $\Delta$ be an $r$-minor ($r \ge 1$) of $\bfX_{[p,q]}$ that does not involve the $p_0$th row and the $q_0$th column. Denote $\Delta'$ the corresponding $r$-minor in $\bfX'_{[p,q]}$ (replace $x_{p,q}$ by $x_{p,q}'$). Then direct computation shows $$\Delta - \Delta' \in  \langle x_{p,q} \mid (p,q) \in \Gamma, p<p_0 \text{ or } q<q_0 \rangle.$$
By (a), $\langle x_{p,q} \mid (p,q) \in \Gamma, p<p_0 \text{ or } q<q_0 \rangle \subseteq I$, so we see that $\Delta-\Delta' \in I \cap I'$.
\item[(c)] Let $\Delta$ be any $r$-minor in $\bfX$ that involves $\Gamma$ but does not involve $c$. Then $\Delta = \Delta'$ where $\Delta'$ is obtained from $\Delta$ by replacing $x_{p,q} ((p,q) \notin \Gamma)$ by $x_{p,q}'$.
\item[(d)]  Let $\Delta$ be any $r$-minor in $\bfX$ that does not involve $\Gamma$ and let
$\Delta'$ the corresponding $r$-minor in $\bfX'$ (replace $x_{p,q}$ by $x_{p,q}'$).
Denote $\widetilde{\Delta}$ and $\widetilde{\Delta'}$ the $r+1$-minors obtained by adding the $p_0$th row and $q_0$th column to $\Delta$ and $\Delta'$ respectively. Then 
\[c\Delta = \widetilde{\Delta'} \text{ and } c\Delta'=\widetilde{\Delta}.\]
\end{itemize}
Now, we are ready to prove \eqref{I=I'}, $I=I'$.\\
We first show that $I \subseteq I'$.
Fix $(p,q) \in D_{=0}(w) \cup \E_{>0}(w)$. We need to show that the following set
\[\{ x_{p,q} \mid (p,q) \in D_{=0}(w) \} \bigcup \left( \bigcup_{(p,q) \in \E_{>0}(w)}  \{  (r_{p,q}(w)+1)\text{-minors in } \bfX_{[p,q]}
\} \right)
\] is contained in $I'$.
Consider the following cases.
\begin{itemize}
\item[(i)] $(p,q) \in D_{=0}(w)$. We must have $p <p_0$ or $q <q_0$. 
\begin{itemize}
\item[(i.1)]If $(p,q) \in \Gamma$, then $x_{p,q} \in I'$ by construction. 
\item[(i.2)]If $(p,q) \notin \Gamma$, then $(p,q) \in D_{=0}(w')$ and hence $x_{p,q}' \in I_{w'} \subseteq I'$. Therefore, $x_{p,q}= x_{p,q}' + c^{-1}x_{p_0,q}x_{p,q_0} \in I'$, since either $x_{p_0,q}$ or $x_{p,q_0}$ is in $I'$.
\end{itemize}
\item[(ii)] $(p,q) \in \E_{>0}(w)$, say $r_{p,q}(w)=r$. Let $\Delta$ be any $r+1$-minor in $\bfX_{[p,q]}$. In this case, $p >p_0$ by \eqref{D=0}.
\begin{itemize}
\item[(ii.1)] $q <q_0$. In this case, $(p,q) \in \E_{=r}(w')$. If $\Delta$ involves the $p_0$th row, expanding along this row we see that 
$\Delta \in \langle x_{p_0,q} \mid q < q_0 \rangle \subseteq I'$. Otherwise, let $\Delta'$ be the corresponding $(r+1)$-minor in $\bfX_{[p,q]}'$ and we have $\Delta -\Delta' \in \langle x_{p_0,q} \mid q < q_0 \rangle \subseteq I'$ by (b). But $(p,q) \in \E_{=r}(w')$ implies $\Delta' \in I_{w'} \subseteq I'$. So $\Delta \in I'$.
\item[(ii.2)] $q>q_0$. In this case, $(p,q) \in \E_{=r-1}(w')$.
\begin{itemize}
\item[(ii.2.1)] If $\Delta$ involves $c$, then $\Delta = c \left( \Delta \setminus [p_0\mid q_0] \right)'$ by (d), where $\Delta \setminus [p_0\mid q_0] $ is the $r$-minor obtained from $\Delta$ by deleting the row and the column involving $c$. But $(p,q) \in \E_{=r-1}(w')$ implies $\left( \Delta \setminus [p_0\mid q_0] \right)' \in I_{w'} \subseteq I'$. So $\Delta \in I'$.
\item[(ii.2.2)]
If $\Delta$ involves $\Gamma$ but does not involve $c$, then by (c) $\Delta = \Delta'$ where $\Delta'$ is obtained from $\Delta$ by replacing $x_{p,q} ((p,q) \notin \Gamma)$ by $x_{p,q}'$. Expanding $\Delta'$ along the row or column involving $\Gamma$, we see that $\Delta' \in \left\langle r\text{-minors in }\bfX'_{[p,q]}\right\rangle$. But $(p,q) \in \E_{=r-1}(w')$ implies
$\left\langle r\text{-minors in }\bfX'_{[p,q]}\right\rangle \subseteq I_{w'} \subseteq I'$. So $\Delta=\Delta' \in I'$.
\item[(ii.2.3)]
If $\Delta$ does not involve $\Gamma$, then $c\Delta = \widetilde{\Delta'}$ by (d). Expanding the $(r+2)$-minor $\widetilde{\Delta'}$ along the row and the column involving $\Gamma$, we see that $\widetilde{\Delta'} \in \left\langle r\text{-minors in }\bfX'_{[p,q]} \right\rangle$. Again, 
$\left\langle r\text{-minors in }\bfX'_{[p,q]}\right\rangle  \subseteq I'$ since $(p,q) \in \E_{=r-1}(w')$. So $\Delta = c^{-1} \widetilde{\Delta'} \in I'$.
\end{itemize}
\end{itemize}
\end{itemize}

Conversely, we show that $I' \subseteq I$.
Fix $(p,q) \in D_{=0}(w') \cup \E_{>0}(w')$. Again, we show that the set
\[\{ x_{p,q}' \mid (p,q) \in D_{=0}(w') \} \bigcup \left( \bigcup_{(p,q) \in \E_{>0}(w')}  \{  (r_{p,q}(w')+1)\text{-minors in } \bfX_{[p,q]}'
\} \right)
\] is contained in $I$.
\begin{itemize}
\item[(i)] $(p,q) \in D_{=0}(w')$.
\begin{itemize}
\item[(i.1)] If $p<p_0$ or $q<q_0$, then $(p,q) \in D_{=0}(w)$ and hence $x_{p,q} \in I$. By (a), either $x_{p_0,q}$ or $x_{p,q_0}$ is in  $I$. Therefore, $$x'_{p,q} = x_{p,q} - c^{-1}x_{p_0,q}x_{p,q_0} \in I.$$
\item[(i.2)] If $p<p_0$ and $q>q_0$, then $(p,q) \in \E_{=1}(w)$. Hence, the $2$-minor $cx'_{p,q} = cx_{p,q}-x_{p_0,q}x_{p,q_0} \in I_w \subseteq I$. So $x'_{p,q} \in I$.
\end{itemize}
\item[(ii)] $(p,q) \in \E_{>0}(w')$. In this case, $p>p_0$ by \eqref{D=0}. Suppose $r_{p,q}(w')=r$ and let $\Delta'$ be any $r+1$-minor in $\bfX_{[p,q]}'$.
\begin{itemize}
\item[(ii.1)] If $q<q_0$, then $(p,q) \in \E_{=r}(w)$. By (d), $c\Delta' = \widetilde{\Delta}$. Expanding $\widetilde{\Delta}$ along the $q_0$th column, we see  $\widetilde{\Delta} \in \left\langle (r+1)\text{-minors in }\bfX_{[p,q]} \right\rangle$. But $(p,q) \in \E_{=r}(w) $ implies 
$\left\langle (r+1)\text{-minors in }\bfX_{[p,q]} \right\rangle \subseteq I$. So $\Delta' = c^{-1}\widetilde{\Delta} \in I$.
\item[(ii.2)] If $q>q_0$, then $(p,q)  \in \E_{=r+1}(w) $. Again, $c\Delta' = \widetilde{\Delta}$ by (d). This time $\widetilde{\Delta} \in I_w \subseteq I$ since $(p,q)  \in \E_{=r+1}(w) $. Therefore, $\Delta' = c^{-1}\widetilde{\Delta} \in I$.
\end{itemize}
\end{itemize}
\end{proof}
\begin{example}
Consider $w = 35142$ in $S_5$. Use the same notation as in the proof of Theorem \ref{frational}. We have $c=x_{13}$,
$I_w = \langle x_{11},x_{12},x_{21},x_{22}, [12\mid 34],[34\mid 12] \rangle$
and $I_{w'} = \langle x_{21}', x_{22}',x_{24}',[34 \mid 12]'\rangle$.
Check that the following elements lie in $ \langle x_{11},x_{12} \rangle$ and hence in $ I \cap I'$:
 $$x_{21}-x_{21}',x_{22}-x_{22}', [34 \mid 12]- [34 \mid 12]', [12 \mid 34]-c^{-1}x_{24}'.$$
Therefore, we see that $I=I'$ and $R_w[c^{-1}] = R_{w'}[x_{11},x_{12},c^{-1}]$.\\
In the following diagram, the $1$'s indicate the permutation and the dots indicate the elements in $D_{>0}(w)$.

\begin{center}
\begin{footnotesize}\begin{tabular}{  | c | c | c | c | c |}
    \hline
       &    & 1 &    &        \\ \hline
       &    &    & $\bullet$ & 1   \\ \hline
   1  &    &    &    &        \\ \hline
       &$\bullet$  &    &  1&     \\ \hline
     &  1  &    &    &       \\ \hline
      \end{tabular}\end{footnotesize}
  \end{center}

\end{example}

\section{Complete intersection matrix Schubert varieties} \label{ci}
We want to characterize the complete intersection MSVs.
By \eqref{reducetopermutation}, we may assume $w \in S_n$. Denote $w_{(p,q)}$ the $r_{p,q}(w) \times r_{p,q}(w)$ submatrix of $w$ involving the rows
of indices $p-r_{p,q}(w), \dots, p-1$ and the columns of indices $q-r_{p,q}(w), \dots, q-1$.
Define the submatrix $\bfX_{(p,q)}$ of $\bfX$ similarily. Furthermore, denote $\bfX_{\overline{(p,q)}}$ the $(r_{p,q}(w)+1) \times (r_{p,q}(w)+1)$ submatrix of $\bfX$ involving the rows of indices
$p-r_{p,q}(w), \dots, p$ and the columns of indices $q-r_{p,q}(w), \dots, q$.
If $r_{p,q}(w)=0$, $\bfX_{\overline{(p,q)}} = \det \bfX_{(p,q)} = x_{p,q}$. However, in order to make our proof more transparent, we will only use $\bfX_{\overline{(p,q)}}$ for $(p,q) \in D_{>0}(w)$.

Recall that the codimension of $\X_w$ in $M_{n \times n}$ is $|D(w)|$.
So $\X_w$ is a complete intersection if and only if $I_w$ can be generated by $|D(w)|$ many elements.
We need the following lemma.
\begin{lemma} \label{diagram=essential}
Let $w$ be such that $\X_w$ is a complete intersection. Then 
for any $(p,q) \in D_{>0}(w)$ and any $1 \le i \le r_{p,q}(w)$,
\[ (p-i,q) \notin D(w) \text{ and } (p,q-i) \notin D(w).\] In particular,
\[D_{>0}(w) \subseteq \E(w),\text{ \ie} D_{>0}(w) = \E_{>0}(w).\]
\end{lemma}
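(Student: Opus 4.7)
I proceed by contrapositive, using Nakayama's lemma and the Gr\"{o}bner basis from Theorem~\ref{initial}(1). By \eqref{reducetopermutation} I reduce to the case $w \in S_n$. Since $R_w = K[\bfX]/I_w$ is Cohen--Macaulay of codimension $|D(w)|$ (Theorem~\ref{initial}(2) combined with preservation of depth under flat degeneration), the complete intersection property is equivalent to $\mu(I_w) = |D(w)|$, where $\mu$ is the minimal number of generators, computed by Nakayama as $\dim_K I_w/\bm I_w$ with $\bm = \langle \bfX \rangle$. Suppose, toward a contradiction, that there exist $(p, q) \in D_{>0}(w)$ and $1 \le i \le r := r_{p, q}(w)$ with $(p - i, q) \in D(w)$ (the case $(p, q - i) \in D(w)$ is symmetric); I will exhibit $|D(w)| + 1$ elements of $I_w$ whose images in $I_w/\bm I_w$ are $K$-linearly independent.

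The ``canonical'' $|D(w)|$ elements are the variables $x_{p', q'}$ for $(p', q') \in D_{=0}(w)$ together with the minors $\det \bfX_{\overline{(p', q')}}$ for $(p', q') \in D_{>0}(w)$; all lie in $I_w$ by \eqref{generator}. Their antidiagonal initial terms are the distinct ``corner'' monomials $x_{p' - r', q'} x_{p' - r' + 1, q' - 1} \cdots x_{p', q' - r'}$ (with $r' = r_{p', q'}(w)$), which pairwise do not divide one another. Since moving southeast within $D(w)$ preserves the rank function (if $(p'+1, q')$ or $(p', q'+1)$ lies in $D(w)$, then by definition of $D(w)$ no new nonzero entries of $w$ enter the relevant submatrix), each corner antidiagonal comes from an $(r'+1)$-minor in some essential box $(p'', q'') \in \E(w)$ with $r_{p'', q''}(w) = r'$, and is thus among the minimal monomial generators of $J_w := \ini(I_w)$ described in Theorem~\ref{initial}(1). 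Linear independence of their images in $I_w/\bm I_w$ follows by a leading-term argument: any cancellation modulo $\bm I_w$ would project to a cancellation among minimal generators of $J_w$ modulo $\bm J_w$, contradicting their minimality.

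The extra element is constructed from the failure hypothesis. At the essential box $(p^*, q^*) \in \E(w)$ obtained by moving southeast from $(p, q)$ in $D(w)$ (with $r^* := r_{p^*, q^*}(w) \ge r$), I select the $(r^* + 1)$-minor of $\bfX_{[p^*, q^*]}$ whose antidiagonal substitutes $x_{p - i, q}$ for one of the canonical corner entries. The hypothesis $(p - i, q) \in D(w)$ together with $1 \le i \le r$ places the row $p - i$ strictly above the corner-row range associated with any canonical block, so that $x_{p - i, q}$ is not a factor of any canonical corner antidiagonal; hence the new antidiagonal is an additional minimal monomial generator of $J_w$, and the corresponding extra minor's image in $I_w/\bm I_w$ is linearly independent from the canonical list.

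The main obstacle is the combinatorial bookkeeping: constructing $(p^*, q^*)$ and the extra minor precisely, verifying the non-divisibility of its antidiagonal by any canonical corner antidiagonal (the key place where the failure hypothesis on $i$ is used), and rigorously transferring linear independence between $I_w/\bm I_w$ and $J_w/\bm J_w$. All of these steps hinge on the detailed structure of the diagram $D(w)$ and the rank function $r_{p, q}(w)$, together with the explicit description of $J_w$ provided by Theorem~\ref{initial}(1).
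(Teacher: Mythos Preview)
Your overall strategy coincides with the paper's: argue by contradiction via Nakayama, using the $|D(w)|$ ``canonical'' elements $\{x_{p,q}:(p,q)\in D_{=0}(w)\}\sqcup\{\det\bfX_{\overline{(p,q)}}:(p,q)\in D_{>0}(w)\}$ together with one extra minor, and establish $K$-linear independence in $I_w/\bm I_w$ through antidiagonal initial terms and Theorem~\ref{initial}. The differences lie in how the extra minor is built and how independence is verified, and there your sketch has genuine gaps.

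\textbf{The extra minor.} The paper does not pass to an essential box. It stays at the given $(p_0,q_0)\in D_{>0}(w)$, first replaces $i$ by the \emph{smallest} $i_0$ with $(p_0-i_0,q_0)\in D(w)$, and takes
\[
\Delta=[\,p_0-r-1,\dots,\widehat{p_0-i_0},\dots,p_0 \mid q_0-r,\dots,q_0\,],
\]
an $(r{+}1)$-minor in the $(r{+}2)\times(r{+}1)$ block $\bfX_\Delta$. Your ``substitute $x_{p-i,q}$ into the corner antidiagonal at $(p^*,q^*)$'' is not well-defined in general: moving southeast within $D(w)$ preserves rank (so in fact $r^*=r$, not merely $r^*\ge r$), but the column range of $\bfX_{\overline{(p^*,q^*)}}$ is $[q^*-r,\,q^*]$, and nothing forbids $q<q^*-r$ (a long horizontal run in $D(w)$ produces exactly this). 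In that case no factor of the canonical antidiagonal at $(p^*,q^*)$ lies in column $q$, so there is nothing to replace. Working at $(p_0,q_0)$ itself avoids this entirely.

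\textbf{Linear independence.} Your argument has two weak points. First, even if $x_{p-i,q}$ avoids every canonical antidiagonal, that only shows the new antidiagonal is \emph{distinct} from the old ones; it does not show that none of the $|D(w)|+1$ antidiagonals is strictly divisible by another generator of $J_w$, which is what minimality in $J_w/\bm J_w$ requires. Second, the ``transfer'' from $I_w/\bm I_w$ to $J_w/\bm J_w$ is not automatic: for a general Gr\"obner basis one can have $\ini(\bm I)\supsetneq\bm\,\ini(I)$, so an element of $\bm I_w$ need not have initial term in $\bm J_w$. The paper sidesteps both issues. It observes that if a nonzero homogeneous $F$ of degree $d$ lies in $\bm I_w$, then $F$ lies in the ideal generated by the Gr\"obner-basis minors of degree $\le d-1$; hence $\ini(F)$ is divisible by some antidiagonal generator $\delta$ of $J_w$ with $\deg\delta<\deg\ini(F)$. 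But $\ini(F)$ is one of the antidiagonals of the blocks $\bfX_{\overline{(p,q)}}$ or $\bfX_\Delta$, so $\delta$ would be an antidiagonal of size at most $r_{p,q}(w)$ (respectively $r_{p_0,q_0}(w)$) sitting inside that block, and one checks directly from Theorem~\ref{initial}(1) that no such antidiagonal belongs to $J_w$.
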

\begin{proof}
We only have to prove the first statement. The second statement follows by definition. 
Suppose the first statement does not hold, then by symmetry we may assume that there exist $(p_0,q_0)$ in $D_{>0}(w)$ and some $1 \le i_0 \le  r_{p_0,q_0}(w) $ so that $(p_0-i_0,q_0) \in D(w)$ and $(p_0-j,q_0)\notin D(w)$ for all $1 \le j <i_0$. 
\\
Denote $r := r_{p_0,q_0}(w) $. 
Consider the $(r+1)$-minor $$\Delta=[p_0-r-1, \dots, \widehat{p_0-i_0}, \dots, p_0 \mid q_0-r, \dots, q_0]$$ 
in the $(r+2)\times(r+1)$ submatrix $\bfX_{\Delta}$ of $\bfX$ involving the rows of indices $p_0-r-1,\dots,p_0$ and the columns of indices $q_0-r,\dots,q_0$. \\
Consider
 the set \[G =  \{ x_{p,q} \mid (p,q) \in D_{=0}(w) \} \bigsqcup \{ \det \bfX_{\overline{(p,q)}} \mid (p,q) \in D_{>0}(w) \} \bigsqcup \{\Delta \}.\]
Observe that the unions are disjoint and $|G| = |D(w)|+1$.
Denote $\bm$ the maximal graded ideal of $K[\bfX]$.
We claim that 
\begin{equation}\label{indep}
\text{ the image of } G \text{ in } I_w/\bm I_w \text{ form a } K(=K[\bfX]/\bm) \text{-linearly independent set.}
\end{equation}
 By Nakayama's lemma, $I_w$ is generated by at least $|D(w)|+1$ elements. This contradicts  the assumption that $\X_w$ is a complete intersection.
\\
It remains to show the claim \eqref{indep}.
Suppose \[
\left( \sum_{(p,q) \in D_{=0}(w)} c_{p,q} x_{p,q} \right) + 
\left( \sum_{(p,q) \in D_{>0}(w)} c_{p,q} \det \bfX_{\overline{(p,q)}} \right)+ c_{\Delta} \Delta  \in \bm I_w
\] where $c_{p,q}$ and $c_{\Delta}$ are in $K$.
Notice $\bm I_w$ is a homogeneous ideal whose generators are of degree at least $2$. This implies $c_{p,q}=0$ for all $(p,q) \in D_{=0}(w)$ since otherwise we will have an element in $\bm I_w$ that has a nonzero degree $1$ part. Therefore, 
\[ F:= \left( \sum_{(p,q) \in D_{>0}(w)} c_{p,q} \det \bfX_{\overline{(p,q)}} \right)+ c_{\Delta} \Delta \in \bm I_w
\]
Fix any antidiagonal term order on $K[\bfX]$.
Then $\ini(F)$ is the antidiagonal term of one of the minors in 
\[ \{ \det \bfX_{\overline{(p,q)}} \mid (p,q) \in D_{>0}(w) \} \bigsqcup \{\Delta \}.
\] So $\ini(F)$ is in the generating set of $\ini(I_w)$ described in
Theorem~\ref{initial}{\it (1)}.
On the other hand, $F \in \bm I_w$ implies that there exists an antidiagonal $\delta$ in the generating set of $\ini(I_w)$ described in Theorem~\ref{initial}{\it (1)} such that $\delta$ is a factor of $\ini(F)$ but $\delta \neq \ini(F)$.
This means that $\delta \in \ini(I_w)$ is an antidiagonal in one of the submatrices of the matrices in
\[
\{ \bfX_{\overline{(p,q)}} \mid (p,q) \in D_{>0}(w) \} \bigsqcup \{ \bfX_{\Delta}\},
\] and that $\delta$ is  of size $\leq r_{p,q}(w)$ (if $\delta$ is in $\bfX_{\overline{(p,q)}}$) or of size $\leq r_{p_0,q_0}(w)$ (if $\delta$ is in $\bfX_{\Delta}$). This is impossible in view of Theorem \ref{initial}{\it (1)}.
Therefore, we conclude that $c_{p,q}(w) = c_{\Delta} = 0$ for all $(p,q) \in D(w)$ and the claim \eqref{indep} is proved.
\end{proof}
Now we are ready for 
\begin{theorem}\label{thmCI} Let $w \in S_n$. The following conditions are equivalent:
\begin{enumerate}
\item $\X_w$ is a complete intersection,
\item for any $(p,q) \in D_{>0}(w)$, $w_{(p,q)}$ is a permutation in $S_{r_{p,q}(w)}$ such that $\X_{w_{(p,q)}}$ is a complete intersection.
\end{enumerate}
In this case, 
\[ \{ x_{p,q} \mid (p,q) \in D_{=0}(w) \} \bigsqcup \{ \det \bfX_{\overline{(p,q)}} \mid (p,q) \in D_{>0}(w) \}
\] is a set of generators for $I_w$.
\end{theorem}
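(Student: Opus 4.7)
My strategy centres on the candidate generating set
\[
G_0 := \{x_{p,q} \mid (p,q) \in D_{=0}(w)\} \sqcup \{\det \bfX_{\overline{(p,q)}} \mid (p,q) \in D_{>0}(w)\}.
\]
One has $G_0 \subseteq I_w$ always (entries vanish on $\X_w$ by the rank-$0$ condition $Z_{[p,q]}=0$; each determinant is an $(r_{p,q}(w)+1)$-minor in $\bfX_{[p,q]}$ at a position in $D(w)$), and $|G_0| = |D(w)| = \operatorname{codim}_{M_{n\times n}}\X_w$. Consequently $\X_w$ is a complete intersection if and only if $G_0$ generates $I_w$, which simultaneously yields the ``in this case'' claim. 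The plan is to first prove the purely combinatorial implication ``if $w_{(p,q)}$ is a permutation in $S_{r_{p,q}(w)}$ for every $(p,q) \in D_{>0}(w)$, then $\langle G_0\rangle = I_w$'', and then derive both directions of the equivalence using Lemma~\ref{diagram=essential} together with a local-global correspondence on the submatrices $w_{(p,q)}$.

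\textbf{Step 1 (combinatorial core).} Assume $w_{(p,q)} \in S_r$ (with $r := r_{p,q}(w)$) for each $(p,q) \in D_{>0}(w)$. Since $w_{(p,q)}$ already accounts for all $r$ ones of $w_{[p,q]}$ inside the box $\{p-r,\dots,p-1\}\times\{q-r,\dots,q-1\}$ and $w(p)>q$, every row $i \le p-r-1$ satisfies $w(i)>q$; symmetrically every column $j \le q-r-1$ satisfies $w^{-1}(j)>p$. Hence any $(i,j)$ with $i\le p$, $j\le q$ satisfying $i \le p-r-1$ or $j \le q-r-1$ has $r_{i,j}(w)=0$, so $(i,j) \in D_{=0}(w)$ and $x_{i,j} \in G_0$. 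By Laplace expansion, any $(r+1)$-minor of $\bfX_{[p,q]}$ involving such a row or column vanishes modulo $\langle G_0\rangle$; the only $(r+1)$-minor using exclusively rows $\{p-r,\dots,p\}$ and columns $\{q-r,\dots,q\}$ is $\det\bfX_{\overline{(p,q)}} \in G_0$. For essential positions $(p,q) \in \E_{=0}(w)$, every essential 1-minor $x_{i,j}$ ($i\le p, j\le q$) satisfies $r_{i,j}(w)\le r_{p,q}(w)=0$, so $x_{i,j} \in G_0$. All essential generators of $I_w$ thus lie in $\langle G_0\rangle$, giving $I_w = \langle G_0\rangle$ and hence $\X_w$ is a complete intersection.

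\textbf{Step 2 (closing the theorem).} Direction $(2)\Rightarrow(1)$ is immediate from Step~1. For $(1)\Rightarrow(2)$: assuming $\X_w$ is a complete intersection, Lemma~\ref{diagram=essential} gives $D_{>0}(w) = \E_{>0}(w)$ and $(p-i,q),(p,q-i) \notin D(w)$ for $1\le i\le r$; combined with $w(p)>q$, $w^{-1}(q)>p$, and the count $r_{p,q}(w)=r$, these force $w_{(p,q)}$ to be a permutation in $S_r$ for every $(p,q) \in D_{>0}(w)$. To conclude that $\X_{w_{(p,q)}}$ is also CI, fix such $(p,q)$ and use the local-global correspondence $D(w_{(p,q)}) \leftrightarrow D(w) \cap \{p-r,\dots,p-1\}\times\{q-r,\dots,q-1\}$ with matching ranks (rows above the box have $w(i)>q$ and contribute no ones to $w_{[\tilde p,\tilde q]}$ for $(\tilde p,\tilde q)$ inside the box): for any local $(p',q') \in D_{>0}(w_{(p,q)})$ with global image $(\tilde p,\tilde q) \in D_{>0}(w)$, the submatrix $(w_{(p,q)})_{(p',q')}$ equals $w_{(\tilde p,\tilde q)}$, which is a permutation by the property already established for $w$. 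Hence Step~1 applies to $w_{(p,q)}$, giving $\X_{w_{(p,q)}}$ CI. The main obstacle is the bookkeeping in Step~1 --- identifying that all variables outside the extended $(r+1)\times(r+1)$ box at each $(p,q) \in D_{>0}(w)$ belong to $G_0$ via membership in $D_{=0}(w)$; once this is in place, Laplace expansion together with Lemma~\ref{diagram=essential} closes the rest cleanly.
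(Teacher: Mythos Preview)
Your proof is correct, and for the direction $(2)\Rightarrow(1)$ it is actually more streamlined than the paper's.

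\textbf{Comparison.} For $(2)\Rightarrow(1)$ the paper selects the ``maximal'' elements $(p_1,q_1),\dots,(p_t,q_t)$ of $D_{>0}(w)$, invokes the full hypothesis that each $\X_{w_{(p_i,q_i)}}$ is a complete intersection to obtain $|D(w_i)|$ generators for $I_{w_i}$, and then assembles these (together with the remaining variables and the $t$ determinants $\det\bfX_{\overline{(p_i,q_i)}}$) into a generating set for $I_w$ of size $|D(w)|$. Your Step~1 is more direct and in fact proves a stronger intermediate statement: the \emph{permutation} condition alone --- ``$w_{(p,q)}\in S_{r_{p,q}(w)}$ for every $(p,q)\in D_{>0}(w)$'' --- already yields $\langle G_0\rangle=I_w$, because every variable in $\bfX_{[p,q]}$ outside the $(r+1)\times(r+1)$ box $\bfX_{\overline{(p,q)}}$ lies in $D_{=0}(w)$, whence Laplace expansion kills every other $(r+1)$-minor. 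This sidesteps the recursive decomposition entirely and explains transparently why the complete-intersection part of hypothesis~(2) is logically redundant once the permutation part holds.

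For $(1)\Rightarrow(2)$ your argument and the paper's coincide in substance: both use Lemma~\ref{diagram=essential} to force $w_{(p,q)}\in S_r$, and both use that $D(w_{(p,q)})$ embeds in $D(w)$ with equal local ranks. The paper phrases the closing step as induction on $r_{p,q}(w)$ and appeals to the already-proved $(2)\Rightarrow(1)$; you phrase it via the identification $(w_{(p,q)})_{(p',q')}=w_{(\tilde p,\tilde q)}$ and appeal to Step~1. These are equivalent.

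\textbf{One minor quibble.} In your Plan you write that ``$\X_w$ is a complete intersection if and only if $G_0$ generates $I_w$'' as if this followed from $|G_0|=|D(w)|$. Only the ``if'' direction follows from the cardinality count; the ``only if'' is precisely what Steps~1--2 establish. It would be cleaner to state just the one-way implication there.
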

\begin{proof}
The conditions in {\it (2)} shows that for any $(p,q) \in D_{>0}(w)$ the only nonzero entries of $w_{[p,q]}$ appear in $w_{(p,q)}$, so all size $(r_{p,q}(w)+1)$ minors except $\det \bfX_{\overline{(p,q)}}$ belong to $\langle
 x_{p,q} \mid (p,q) \in D_{=0}(w) \rangle$. 
Therefore,  the last statement follows immediately from \eqref{generator} and the equivalence of {\it (1)} and {\it (2)}.

We prove {\it(2)} implies {\it(1)}.
Let $(p_1,q_1),\dots,(p_t,q_t)$ be all the elements in $D_{>0}(w)$ satisfying
\[ \{ (p,q) \mid p \geq p_i, q \geq q_i \} \cap D_{>0}(w) = \emptyset.
\]
Denote $r_i = r_{p_i,q_i}(w)$ and $w_i = w_{(p_i,q_i)} \in S_{r_i}$.
By the assumptions in {\it(2)}, the diagram $D(w_i)$ of $w_i$ is contained in $D(w)$. Also, the ideal $I_{w_i}$ of $\X_{w_i}$ is generated by $|D(w_i)|$ many elements, since $\X_{w_i}$ is a complete intersection.
Note also that the conditions in {\it (2)} and the choice of $(p_i,q_i)$ imply that $D(w)$ can be decomposed as
\[
 \left\{ (p,q)  \bigg| (p,q)\in D_{=0}(w) \setminus \bigsqcup^t_{i=1} D(w_i) \right\}  \bigsqcup  \left( \bigsqcup^t_{i=1} D(w_i) \right)
 \bigsqcup \left\{ (p_i,q_i) \mid 1\leq i \leq t \right\} 
\] where the unions are disjoint.
Furthermore, by construction one can check using \eqref{generator}  that
\[
I_w = \left\langle x_{p,q} \bigg| (p,q) \in D_{=0}(w) \setminus \bigsqcup^t_{i=1} D(w_i) \right\rangle + \sum^t_{i=1} I_{w_i} + \sum^t_{i=1} \left\langle \det \bfX_{\overline{(p_i,q_i)}} \right\rangle.
\] So $I_w$ is generated by 
\[ \left| \left\{ (p,q) \in D_{=0}(w) \bigg| (p,q) \notin \bigsqcup^t_{i=1} D(w_i)  \right\} \right| +
\sum^t_{i=1} \left| D(w_i) \right| +t = \left| D(w)\right|
\] many elements. Therefore, $\X_w$ is a complete intersection.
\vskip 10pt
To prove {\it (1)} implies {\it (2)}, let $(p,q) \in D_{>0}(w)$ and use induction on $r_{p,q}(w)$.
When $r_{p,q}(w)=1$, we must have $w_{p-1,q-1}=1$ since otherwise either $(p-1,q) \in D_{=1}(w)$ or $(p,q-1) \in D_{=1}(w)$ which contradicts Lemma \ref{diagram=essential}. Also, $\X_{w_{(p,q)}}$ is the affine line, so we are done for $r_{p,q}(w)=1$.\\
Suppose $r_{p,q}(w) >1$ and denote $r=r_{p,q}(w)$. By Lemma \ref{diagram=essential}, 
\[
(p-i,q) \notin D(w) \text{ and } (p,q-i) \notin D(w), \text{ for }i=1, \dots,r.
\]
This implies that $\rank (w_{(p,q)})=r$ and $w_{(p,q)} \in S_r$. 
Moreover, consider the diagram $D(w_{(p,q)})$. This is exactly the part of the diagram $D(w)$ involving the rows of indices $p-r, \dots, p-1$ and the columns of indices $q-r, \dots, q-1$. For any $(p',q') \in D_{>0}(w_{(p,q)})$, $r_{p',q'}(w_{(p,q)})=r_{p',q'}(w)<r$.
So by inductive hypothesis, $w_{(p,q)} \in S_r$ satisfies the conditions in (2).
Therefore, by the implication of {\it (2)}$\Rightarrow${\it (1)} we just proved, $\X_{w_{(p,q)}}$ is a complete intersection as desired.
\end{proof}
\begin{example} Consider $w \in S_6$.
Again, In the following diagrams the $1$'s indicate the permutation and the dots indicate the elements in $D_{>0}(w)$.
\begin{itemize}
\item[(1)] If $w= 361452$, then $\overline{X}_w$ is not a complete intersection since $D_{>0}(w) = \{ (2,4),(2,5),(4,2),(5,2) \}$ but $(2,5) \notin \E_{>0}(w)$. 
\begin{center}
\begin{footnotesize}\begin{tabular}{ | c | c | c | c | c | c |}
    \hline
       &    & 1 &    &    &    \\ \hline
       &    &    & $\bullet$ & $\bullet$ & 1 \\ \hline
    1 &    &    &    &    &    \\ \hline
       & $\bullet$ &    & 1 &    &    \\ \hline
       & $\bullet$   &    &    & 1  &    \\ \hline
       & 1  &    &    &    &    \\ \hline
  \end{tabular}\end{footnotesize}
  \end{center}
\item[(2)] If $w=352614$, then $D_{>0}(w) = \{(2,4),(4,4)\}=\E_{>0}(w)$. But $\overline{X}_w$ is still not a complete intersection since $w_{(4,4)} \notin S_2$. So we see that the condition $D_{>0}(w) =\E_{>0}(w)$ is not sufficient for $\overline{X}_w$ to be a complete intersection.

\begin{center}
\begin{footnotesize}\begin{tabular}{ | c | c | c | c | c | c |}
    \hline
       &    & 1 &    &    &    \\ \hline
       &    &    & $\bullet$ & 1 &  \\ \hline
     &  1  &    &    &    &    \\ \hline
       &  &    & $\bullet$ &    &   1 \\ \hline
     1  &    &    &    &   &    \\ \hline
       &   &    &  1  &    &    \\ \hline
  \end{tabular}\end{footnotesize}
  \end{center}
\item[(3)] If $w=462153$, then $\overline{X}_w$ is a complete intersection, and
$I_w$ is generated by $\{x_{i,j} \mid 1\le i \le 2, 1 \le j \le 3 \} \bigcup \{x_{31}, \det \bfX_{\overline{(2,5)}}, \det \bfX_{\overline{(5,3)}} \} $

\begin{center}
\begin{footnotesize}\begin{tabular}{ | c | c | c | c | c | c |}
    \hline
       &    &  &  1  &    &    \\ \hline
       &    &    & &$\bullet$ & 1   \\ \hline
     &  1  &    &    &    &    \\ \hline
     1  &  &    &  &    &    \\ \hline
      &    &  $\bullet$  &    &  1 &    \\ \hline
       &   &  1  &    &    &    \\ \hline
  \end{tabular}\end{footnotesize}
  \end{center}
\end{itemize}
\end{example}
\bibliographystyle{plain}
\bibliography{all}
\end{document}